\definecolor{darkgreen}{rgb}{0.0, 0.8, 0.0}
\definecolor{linkcol}{rgb}{0,0,0.38}
\definecolor{citecol}{rgb}{0,0.2,0}
\definecolor{urlcol}{rgb}{0.1,0.35,0}
\newcommand\bN{\ensuremath{\mathbb{N}}}
\newcommand\cB{\ensuremath{\mathcal{B}}}
\newcommand\cC{\ensuremath{\mathcal{C}}}
\newcommand\cF{\ensuremath{\mathcal{F}}}
\newcommand\cN{\ensuremath{\mathcal{N}}}
\newcommand\cP{\ensuremath{\mathcal{P}}}
\newcommand\cW{\ensuremath{\mathcal{W}}}
\newcommand\vf{\text{VF}}
\newcommand\fodd{\mathcal{F}_\text{odd}}
\newcommand\clouds{\frak{W}}
\renewcommand{\epsilon}{\varepsilon}
\newtheorem{theorem}{Theorem}
\newtheorem{lemma}[theorem]{Lemma}
\newtheorem{corollary}[theorem]{Corollary}
\newtheorem{proposition}[theorem]{Proposition}
\newtheorem{definition}[theorem]{Definition}
\date{}
\title{Improved Erd\H{o}s-P\'osa inequalities for odd cycles in planar graphs}
\author{
    Luise Puhlmann\thanks{Research Institute for Discrete Mathematics and Hausdorff Center for Mathematics, University of Bonn}
    \and
    Niklas Schlomberg\footnotemark[1]
}
\begin{document}

\maketitle

\begin{abstract}
In an undirected graph, the odd cycle packing number is the maximum number of pairwise vertex-disjoint odd cycles.
The odd cycle transversal number is the minimum number of vertices that hit every odd cycle.
The maximum ratio between transversal and packing number is called Erd\H{o}s-P\'osa ratio.
We show that in planar graphs, this ratio does not exceed 4.
This improves on the previously best known bound of 6 by Kr\'al\textquoteright{}, Sereni and Stacho.
\end{abstract}

\section{Introduction}

Given a ground set $V$ and a family $\cC$ of subsets of $V$, a \emph{packing} for $\cC$ is a collection of pairwise disjoint elements of $\cC$.
A \emph{transversal} for $\cC$ is a subset $T \subseteq V$ that intersects every element of $\cC$.
Generally, given a packing $\cP \subseteq \cC$, a transversal for $\cC$ must contain a distinct element of every element of $\cP$.
Thus, the \emph{transversal number} $\tau$, i.e.\@ the minimum cardinality of a transversal for $\cC$, is at least as large as the \emph{packing number} $\nu$, the maximum cardinality of a packing for $\cC$.

A long line of research addresses the question under which circumstances the transversal number can be bounded in terms of the packing number.
Erd\H{o}s and P\'osa~\cite{ErdP65} showed that if $\cC$ is the family of vertex sets of cycles in an undirected graph $G$, there exists a function $f \colon \bN \to \bN$ such that $\tau \leq f(\nu)$.
This property is known as Erd\H{o}s-P\'osa property.
The Erd\H{o}s-P\'osa  property also holds for directed cycles in digraphs~\cite{ReeRST96}.
However, for odd cycles Reed gave a counterexample for the Erd\H{o}s-P\'osa  property, even for graphs embedded in the projective plane: The so-called ``Escher walls'' have packing number 1, but allow for arbitrarily large transversal numbers~\cite{Ree99}.

In planar graphs, odd cycles do have the Erd\H{o}s-P\'osa  property~\cite{Ree99}.
In fact, in this case the function $f$ can even be chosen linear, i.e.\@ the ratio between $\tau$ and $\nu$ can be bounded by a constant.
The smallest such constant is called Erd\H{o}s-P\'osa ratio.
Fiorini et al.\@~\cite{FioHRV07} gave the first constant upper bound on this ratio of $10$, which was improved to $6$ by Kr\'al\textquoteright{}, Sereni and Stacho~\cite{KraSS12}.
The best known lower bound for this ratio is $2$, which is attained on a complete graph on $4$ vertices.
In this paper we combine methods of~\cite{KraSS12} with a new structural lemma to improve the upper bound to $4$:

\begin{restatable}{theorem}{mainthm}\label{thm:main_thm}
Let $G$ be an undirected planar graph.
Then $\tau \leq 4 \nu$, where $\tau$ is the odd cycle transversal number and $\nu$ is the odd cycle packing number.
\end{restatable}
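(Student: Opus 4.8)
We may assume $G$ is connected (otherwise treat components separately; both $\tau$ and $\nu$ are additive over components). Fix a planar embedding of $G$; every face is bounded by a closed walk, and we call a face \emph{odd} if this walk has odd length. Write $\fodd$ for the set of odd faces. For any cycle $C$ of $G$, the sum of the lengths of the boundary walks of the faces enclosed by $C$ counts each edge strictly inside $C$ twice and each edge of $C$ once, hence is congruent to $|E(C)|$ modulo $2$; so $C$ is odd if and only if it encloses an odd number of faces of $\fodd$. In particular $|\fodd|$ is even, and $G$ is bipartite if and only if $\fodd=\emptyset$, in which case $\tau=\nu=0$ and we are done; so assume $\fodd\neq\emptyset$.

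The plan is to exhibit a family $\cP$ of pairwise vertex-disjoint odd cycles together with a vertex set $T$ with $G-T$ bipartite and $|T|\le 4\,|\cP|$; then $\tau\le|T|\le 4\,|\cP|\le 4\nu$. Following Kr\'al\textquoteright{}, Sereni and Stacho, I would first pass to a \emph{reduced} instance: using the dual description of odd cycles of $G$ (as the cuts of $G^{*}$ that split $\fodd$ into two odd parts) and of odd-cycle transversals (as $\fodd$-joins of $G^{*}$), delete and contract parts of $G$ that are inessential for both $\tau$ and $\nu$, after which no odd cycle is redundant. The new structural lemma then organizes $\fodd$ into \emph{clouds} $\clouds$ with two properties: (a)~distinct clouds are pairwise far apart in $G$; and (b)~each cloud $W$ is ``internally simple'' — every odd cycle of $G$ that encloses an odd number of the odd faces of $W$ can be killed by deleting a set $X_{W}$ of at most four vertices, and $W$ hosts one such odd cycle $C_{W}$ in its interior. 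By~(a) the cycles $\{C_{W}:W\in\clouds\}$ are pairwise vertex-disjoint, so $\cP:=\{C_{W}\}$ is a packing with $|\cP|=|\clouds|$. Put $T:=\bigcup_{W\in\clouds}X_{W}$. If an odd cycle of $G$ avoided $T$, then by~(b) it would enclose an even number of odd faces of every cloud, hence an even total number of odd faces, contradicting the parity count above; so $G-T$ is bipartite. Finally $|T|\le 4\,|\clouds|=4\,|\cP|\le 4\nu$, which yields the theorem.

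The crux — and the only point where planarity is genuinely used, the statement failing on the projective plane, where Reed's Escher walls form a single ``cloud'' that no bounded vertex set can cap — is the structural lemma: that a reduced plane graph admits such a decomposition of $\fodd$ into far-apart, internally simple clouds, each capped by \emph{four} vertices. I expect this to be the main obstacle, and in particular getting the constant down from the six vertices per cloud implicit in Kr\'al\textquoteright{}, Sereni and Stacho: this should come from analyzing an innermost (minimal) cloud more tightly, so that after the reduction it attaches to the rest of $G$ along a controlled boundary and two vertices near each of two ``ends'' already separate off all odd cycles that interact with it. The remaining technical burden is to keep the reduction and the cloud decomposition compatible, to ensure each $C_{W}$ stays near $W$, and to check that capping every cloud leaves no odd cycle — including those whose odd faces are spread over several clouds, which the parity count handles.
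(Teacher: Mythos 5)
Your high-level plan --- organize the odd faces into clouds, cap each cloud by a few vertices, and pack one odd cycle per cloud --- resembles the paper's starting point, but the structural lemma you rely on is not just unproven, it is false as stated. You require that \emph{every} odd cycle enclosing an odd number of odd faces of a cloud $W$ be killed by a single set $X_W$ of at most four vertices. A cloud can have arbitrarily large packing number (e.g.\ a long chain of odd triangles $F_1, F_2, \dots, F_k$ where consecutive triangles share one vertex forms a single cloud containing about $k/2$ pairwise disjoint odd faces), so no bounded set of vertices can hit all odd cycles interacting with it, while your construction would credit that cloud with only one packing element $C_W$. The paper avoids this by \emph{not} capping a whole cloud at once: using a reduced planar conflict graph on the faces of the cloud and an Euler-formula argument (\cref{lem:min_degree}), it finds one face of low degree, caps only that face together with its neighbours by at most $4$ vertices (\cref{lem:clouds}), merges those faces into a single special face, and recurses --- so a large cloud is consumed over many rounds, gaining one packing element per at most four transversal vertices each round (\cref{thm:main_thm_special}). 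Only clouds with packing number $1$ are capped in one shot, and there two vertices suffice (\cref{lem:packing_number1}).

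A second gap is your claim that ``the parity count handles'' odd cycles whose enclosed odd faces are spread over several clouds, or which enclose odd faces of a cloud without coming near it. Concentric odd cycles winding around a capped cloud need not meet any $X_W$, and there can be many disjoint ones, so they must contribute their own packing elements. The paper pays for these with \cref{lemma:connect_clouds} (from Kr\'al\textquoteright{}, Sereni and Stacho), which costs an extra $2$ transversal vertices per cloud and is financed by the packing $\nu_\text{dead}$ of odd cycles disjoint from all deadly faces; this is precisely why the base-case accounting reads $|T^*| \leq |\cP| + 4|\clouds| \leq 4(|\cP| + |\clouds|)$ rather than $|T^*|\le 4|\clouds|$. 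Your budget of $4|\clouds|$ total has no room for this term. Finally, the ``pairwise far apart'' property of clouds and the dual/reduction step you invoke are not needed and not established; clouds are merely vertex-disjoint (being distinct connected components of $\vf(G)[\cF^*\cup V(G)]$ restricted to $\cF^*$), which already suffices for the disjointness of the chosen faces.
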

As all of our methods are constructive, a corresponding odd cycle packing and odd cycle transversal can even be computed in polynomial time.
For the problem of finding a maximum-cardinality odd cycle packing in planar graphs however, there also exists a $(3+\varepsilon)$-approximation algorithm for any $\varepsilon > 0$~\cite{SchTV23}.

For the minimum-cardinality odd cycle transversal problem in planar graphs the best known polynomial-time approximation guarantee is $2.4$~\cite{BerY12}.
Both of these results hold not only for odd cycles, but for packing and transversal problems in a rich class of cycle families in planar graphs, which are called \emph{uncrossable} (due to Goemans and Williamson~\cite{GoeW98}).
It turns out that all uncrossable cycle families allow for a finite Erd\H{o}s-P\'osa ratio.
The best upper bound in this general setting is $8.38$~\cite{Sch24}; for the family of arbitrary cycles in $G$, there exists a better upper bound of $3$~\cite{CheFS12}\cite{MaYZ13}.

There are also some results on the Erd\H{o}s-P\'osa property for odd cycles in graphs with other restrictions:
For example, if $G$ is highly connected, the Erd\H{o}s-P\'osa ratio does not exceed $2$~\cite{RauR01}\cite{Tho01}.
Also, the Erd\H{o}s-P\'osa property holds if $G$ can be embedded in an orientable surface of bounded genus, which was first proved by~\cite{KawN07}.
Conforti et al.\@~\cite{ConFHJS20} gave a bound on the Erd\H{o}s-P\'osa ratio with linear dependence on the genus.

There is also an edge-variant of the problem, where $\cC$ is the collection of edge sets of odd cycles in an undirected graph.
Here, the Erd\H{o}s-P\'osa property still does not hold for odd cycles in general graphs, but in planar graphs the Erd\H{o}s-P\'osa ratio is 2~\cite{KraV04}.

\medskip

Our paper is structured as follows:
We first define some useful notation and concepts in \cref{sec:preliminaries}.
\cref{sec:thm} is devoted to the proof of our main theorem (\cref{thm:main_thm}), an outline of our main techniques can be found in \cref{sec:outline}.
The key technical lemmata are proven in \cref{sec:cloud_lemma}.

\section{Preliminaries}
\label{sec:preliminaries}
We start by formally defining odd cycle transversals for general graphs:

\begin{definition}[Odd cycle transversal]
    \label{def:odd_cycle_transversal}
    Given a graph $G = (V,E)$, an odd cycle transversal is a set of vertices $T\subseteq V$ such that $T\cap V(C)\neq \emptyset$ for all odd cycles $C$ in $G$. The transversal number $\tau(G)$ is the size of an odd cycle transversal of minimum size.
\end{definition}

For planar graphs we can give a different definition which is very useful for us.
If $G$ is a planar graph, embedded in the plane, let $\cF(G)$ be the set of faces of $G$.
For a face $F \in \cF(G)$ let $V(F)$ be the set of vertices on the boundary of $F$.
The edge set $E(F)$ is the set of boundary edges of $F$, excluding bridges.
Therefore, $E(F)$ is Eulerian.
We call $F$ \emph{odd} if $|E(F)|$ is odd.
In that case $E(F)$ contains an odd cycle.
We define $\fodd(G) \subseteq \cF(G)$ to be the set of odd faces of $G$.

\begin{proposition}\label{prop:odd_cycle_odd_faces}
Let $G$ be a planar graph, embedded in the plane, and $C$ a cycle in $G$.
Let $\cF(C) \subseteq \cF(G)$ be the set of faces of $G$ inside $C$ (w.r.t.\@ the planar embedding of $G$).
$C$ is odd if and only if $\cF(C)$ contains an odd number of odd faces.
\end{proposition}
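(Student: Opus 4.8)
The plan is to pass to parities modulo $2$ and prove the single congruence
\[
\sum_{F \in \cF(C)} |E(F)| \;\equiv\; |E(C)| \pmod 2 .
\]
Granting this, the proposition is immediate: since a face $F$ is odd exactly when $|E(F)|$ is odd, the left-hand side reduces mod $2$ to the number of odd faces in $\cF(C)$, while by definition the right-hand side is odd exactly when $C$ is an odd cycle. So the whole argument reduces to establishing the displayed identity.

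To prove it I would count edge--face incidences. The cycle $C$ is a Jordan curve, so it bounds a closed disk $D$, and $\cF(C)$ is precisely the set of faces of $G$ whose (open) interior is contained in the interior of $D$. Rewrite
\[
\sum_{F \in \cF(C)} |E(F)| \;=\; \sum_{e \in E(G)} \bigl|\{F \in \cF(C) : e \in E(F)\}\bigr|
\]
and evaluate the inner count for each edge $e$. If $e \in E(C)$, then $e$ is not a bridge (it lies on a cycle), its two incident faces are distinct, and one of them lies inside $D$ while the other lies outside; hence $e$ is counted exactly once. If $e$ lies strictly inside $D$ and is not a bridge, both of its incident faces belong to $\cF(C)$, so $e$ is counted exactly twice. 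If $e$ is a bridge lying strictly inside $D$, then in a plane graph it is incident to a single face, and $e \notin E(F)$ for that face by the definition of $E(F)$, so $e$ is counted zero times; any edge outside $D$ is likewise counted zero times. Summing these contributions shows that $\sum_{F \in \cF(C)} |E(F)|$ and $|E(C)|$ differ by an even number, which is exactly the claimed congruence.

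The step that needs the most care — and essentially the only real subtlety — is the treatment of bridges and, more generally, of parts of $G$ drawn strictly inside $C$ that are not $2$-connected (pendant edges or trees dangling into $D$): one has to check that excluding bridges from $E(F)$ precisely cancels the fact that a bridge borders only one face, so that such structures never change any relevant parity, and that ``faces inside $C$'' is unambiguous in their presence. This is handled uniformly by the case analysis above, which only ever refers to the two faces incident to a non-bridge edge or to the single face incident to a bridge. (If one preferred a more elementary route, one could instead induct on the number of vertices and edges of $G$ lying strictly inside $C$, at each step either deleting a pendant bridge structure inside $D$ — which alters neither $|E(C)|$ nor $|E(F)|$ for any $F \in \cF(C)$ — or cutting $C$ along an interior chord $e$ into two cycles $C_1, C_2$ with $|E(C_1)| + |E(C_2)| = |E(C)| + 2$ and $\cF(C)$ the disjoint union of $\cF(C_1)$ and $\cF(C_2)$; but the incidence count above is shorter and sidesteps these cases.)
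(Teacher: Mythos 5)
Your proof is correct and follows essentially the same route as the paper: the paper simply cites the identity $E(C) = \bigoplus_{F \in \cF(C)} E(F)$, and your edge--face incidence count (each cycle edge bordered by exactly one face of $\cF(C)$, each interior non-bridge edge by exactly two, bridges and exterior edges by none) is precisely the standard verification of the mod-$2$ shadow of that identity. You spell out the bridge case that the paper leaves implicit, but there is no difference in substance.
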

\begin{proof}
This follows directly from the fact that $E(C)$ can be written as the symmetric difference 
\[E(C) = \bigoplus_{F \in \cF(C)} E(F).
\qedhere\]
\end{proof}

As in~\cite{KraSS12}, we define the \emph{vertex-face incidence graph}:
\begin{definition}[Vertex-face incidence graph]
	\label{def:vf}
	Given a planar graph $G$ with a fixed planar embedding, its \emph{vertex-face incidence graph} $\vf(G)$ is the planar graph on the vertex set $\cF(G) \cup V(G)$ with the edge set $E(\vf(G))$ being $\{\{F, v\} : F \in \cF(G), v \in V(F)\}$.
	We embed $\vf(G)$ such that each edge $\{v, F\}$ is embedded inside $F$, see \Cref{fig:vf} for an example.
\end{definition}

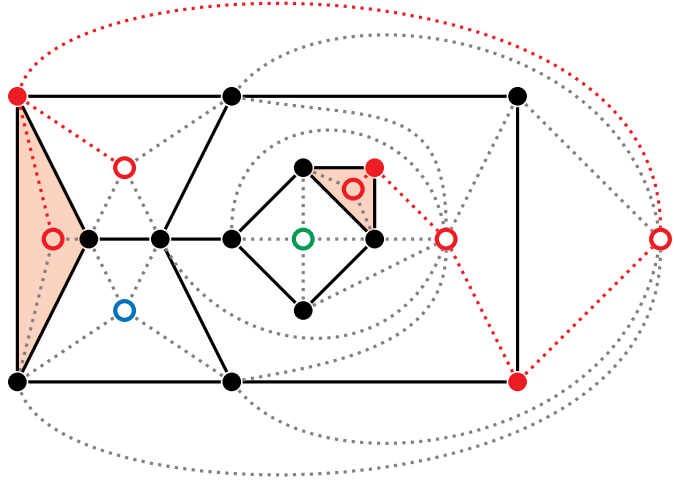
\begin{figure}[h]
	\centering
	  \begin{tikzpicture}[scale=0.95, very thick]
  	\tikzstyle{gvertex} = [fill, circle, inner sep=2.5, color=black]
  	\tikzstyle{vfvertex} = [draw, ultra thick, circle, inner sep=2.5]
  	
  	\fill[Red!20] (0,0) -- (0,4) -- (1,2) -- cycle;
  	\fill[Red!20] (5,2) -- (5,3) -- (4,3) -- cycle;
  	
  	\tikzstyle{vfedge} = [dotted, color=gray]
	\node[gvertex] (a) at (0,0) {};
	\node[gvertex] (b) at (3,0) {};
	\node[gvertex, Red] (c) at (7,0) {};
	\node[gvertex] (d) at (7,4) {};
	\node[gvertex] (e) at (3,4) {};
	\node[gvertex, Red] (f) at (0,4) {};
	\node[gvertex] (g) at (1,2) {};
	\node[gvertex] (h) at (2,2) {};
	\node[gvertex] (i) at (3,2) {};
	\node[gvertex] (j) at (4,1) {};
	\node[gvertex] (k) at (5,2) {};
	\node[gvertex, Red] (l) at (5,3) {};
	\node[gvertex] (m) at (4,3) {};
	\draw (a) -- (b) -- (c) -- (d) -- (e) -- (f) -- (a) -- (g) -- (h) -- (b);
	\draw (f) -- (g);
	\draw (h) -- (e);
	\draw (h) -- (i) -- (j) -- (k) -- (l) -- (m) -- (i);
	\draw (k) -- (m);
	
	\node[vfvertex, Red] (A) at (0.5, 2) {};
	\node[vfvertex, RoyalBlue] (B) at (1.5, 1) {};
	\node[vfvertex, Red] (C) at (6, 2) {};
	\node[vfvertex, ForestGreen] (D) at (4, 2) {};
	\node[vfvertex, Red] (E) at (4.7, 2.7) {};
	\node[vfvertex, Red] (F) at (1.5, 3) {};
	\node[vfvertex, Red] (G) at (9, 2) {};
	
	\draw[vfedge] (a)--(A);
	\draw[vfedge, Red] (f)--(A);
	\draw[vfedge] (g)--(A);
	\draw[vfedge] (a)--(B);
	\draw[vfedge] (b)--(B);
	\draw[vfedge] (g)--(B);
	\draw[vfedge] (h)--(B);
	\draw[vfedge] (e)--(F);
	\draw[vfedge, Red] (f)--(F);
	\draw[vfedge] (g)--(F);
	\draw[vfedge] (h)--(F);
	\draw[vfedge] (i)--(D);
	\draw[vfedge] (j)--(D);
	\draw[vfedge] (k)--(D);
	\draw[vfedge] (m)--(D);
	\draw[vfedge] (k) -- (E);
	\draw[vfedge, Red] (l) -- (E);
	\draw[vfedge] (m) -- (E);
	\draw[vfedge] (k) -- (C);
	\draw[vfedge, Red] (l) -- (C);
	\draw[vfedge] (j) -- (C);
	\draw[vfedge, Red] (c) -- (C);
	\draw[vfedge] (d) -- (C);
	\draw[vfedge] (b) to[out=10, in=270] (C);
	\draw[vfedge] (h) to[controls=+(300:2) and +(250:2)] (C);
	\draw[vfedge] (e) to[controls=+(-10:2) and +(90:2)] (C);
	\draw[vfedge] (i) to[controls=+(90:2) and +(100:2)] (C);
	\draw[vfedge] (a) to[controls=+(-70:2) and +(-90:4)] (G);
	\draw[vfedge] (b) to[controls=+(-50:2) and +(-100:3)] (G);
	\draw[vfedge, Red] (c) -- (G);
	\draw[vfedge] (d) -- (G);
	\draw[vfedge] (e) to[controls=+(50:2) and +(100:3)] (G);
	\draw[vfedge, Red] (f) to[controls=+(70:2) and +(90:4)] (G);
\end{tikzpicture}	
	\caption{The figure shows a graph $G$ where $V(G)$ are the filled vertices and $E(G)$ are the solid edges.
		Its vertex-face incidence graph $\vf(G)$ is the graph on all vertices (filled and empty) with the dotted edges.
		When choosing $T\subseteq V(G)$ to be the filled red vertices, $\vf(G)[\cF(G)\cup T]$ decomposes into three connected components, drawn in red, green, and blue, respectively.
		Hence $T$ is an $\cF$-transversal when choosing $\cF$ to be the faces of $G$ that are filled in red.
		Note that $\cF$ are exactly the odd faces of $G$; thus $T$ is an odd cycle transversal for $G$ by \cref{prop:vf_definition_of_transversal}.}
	\label{fig:vf}
\end{figure}

The notion of the vertex-face incidence graph allows us to define the following important concept:
\begin{definition}[$\cF$-Transversal]\label{def:transversal_of_face_set}
Let $G$ be a planar graph, embedded in the plane.
Let $\cF \subseteq \cF(G)$ have even cardinality $|\cF|$.
A set $T \subseteq V(G)$ is an \emph{$\cF$-transversal} if each connected component of $\vf(G)[\cF(G) \cup T]$ contains an even number of elements of $\cF$.
We define $\tau(\cF)$ to be the size of a minimum-cardinality $\cF$-transversal.
For an example see \Cref{fig:vf}.
\end{definition}

It is reasonable to call both vertex sets defined in \cref{def:odd_cycle_transversal} and \cref{def:transversal_of_face_set} \emph{transversals} since these notions coincide for $\cF := \fodd(G)$:
\begin{proposition}\label{prop:vf_definition_of_transversal}
Let $G$ be a planar graph with a fixed planar embedding.
A set $T \subseteq V(G)$ is an odd cycle transversal for $G$ if and only if $T$ is an $\fodd(G)$-transversal according to \cref{def:transversal_of_face_set}.
\end{proposition}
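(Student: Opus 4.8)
The plan is to characterize, for a fixed $T \subseteq V(G)$, when $G - T$ (equivalently $G$ with the vertices of $T$ deleted) is bipartite, and to show this is exactly the condition that every connected component of $\vf(G)[\cF(G) \cup T]$ contains an even number of odd faces. Recall that $T$ is an odd cycle transversal if and only if $G - T$ has no odd cycle, i.e.\@ $G - T$ is bipartite. So the whole statement reduces to proving the equivalence
\[
G - T \text{ is bipartite} \iff \text{every component of } \vf(G)[\cF(G) \cup T] \text{ contains evenly many odd faces.}
\]

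First I would set up the bridge between faces of $G$ and faces of $G - T$. Deleting the vertices of $T$ (and their incident edges) merges the faces of $G$ into larger regions: each face of $G - T$ is a union of faces of $G$ together with the edges and $T$-vertices that got deleted. I claim two faces $F, F'$ of $G$ lie in the same face of $G-T$ precisely when $F$ and $F'$ are in the same connected component of $\vf(G)[\cF(G)\cup T]$. This is the geometric heart of the argument: a path in $\vf(G)[\cF(G)\cup T]$ alternates between faces of $G$ and vertices of $T$, and each length-two step $F - v - F'$ (with $v \in T \cap V(F) \cap V(F')$) witnesses that $F$ and $F'$ become connected once $v$ is deleted, since the $\vf$-edges $\{v,F\}$ and $\{v,F'\}$ were drawn inside $F$ and $F'$ and meet at $v$; conversely, any route between two faces of $G$ that avoids $V(G)\setminus T$ can be pushed onto such alternating paths. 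So the components of $\vf(G)[\cF(G)\cup T]$ restricted to $\cF(G)$ are exactly the partition of $\cF(G)$ induced by the faces of $G - T$.

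Next I would count parities. A face $\tilde F$ of $G - T$ is odd (i.e.\@ $|E(\tilde F)|$ is odd) if and only if it contains an odd number of odd faces of $G$: indeed $E(\tilde F) = \bigoplus_{F \subseteq \tilde F} E(F)$ as in the proof of \cref{prop:odd_cycle_odd_faces} (the edges of $G$ deleted with $T$ appear in no face of $G-T$ and drop out of the symmetric difference), so $|E(\tilde F)| \equiv \sum_{F \subseteq \tilde F} |E(F)| \pmod 2$. Combining this with the previous paragraph: every component of $\vf(G)[\cF(G)\cup T]$ contains evenly many odd faces of $G$ $\iff$ every face of $G - T$ is even $\iff$ $G - T$ is planar with all faces even $\iff$ $G - T$ is bipartite. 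The last equivalence is the standard fact that a connected plane graph is bipartite iff every face boundary is even; for the disconnected case one applies it componentwise, which is harmless since an edgeless component contributes no odd cycle and no odd face.

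The main obstacle is making the face-merging correspondence in the second paragraph fully rigorous — in particular handling bridges and cut vertices of $G$, where "the face on each side" is subtle, and confirming that the exclusion of bridges from $E(F)$ in the definition keeps each $E(F)$ Eulerian and keeps the symmetric-difference identity intact after deleting $T$. I expect this to require a careful but routine topological argument about the plane embedding (e.g.\@ via the fact that $\vf(G)$ itself is a plane graph whose faces are in bijection with the edges of $G$), rather than any new idea; everything downstream is parity bookkeeping.
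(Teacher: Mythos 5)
Your proposal is correct, but it takes a genuinely different route from the paper. You pass to the deleted graph $G-T$, identify the faces of $G-T$ with the connected components of $\vf(G)[\cF(G)\cup T]$ (each face of $G-T$ being the union of the $G$-faces in one component), transfer parities via $E(\tilde F)=\bigoplus_{F\subseteq\tilde F}E(F)$, and finish with ``a plane graph is bipartite iff every face is even.'' The paper never mentions $G-T$ or its embedding: for ``$\Rightarrow$'' it takes a component with odd parity, forms the odd Eulerian edge set $\bigoplus_{F\in X\cap\cF(G)}E(F)$ directly in $G$, and observes that any edge meeting $T$ has both incident faces in the same component and hence cancels, so the resulting odd cycle avoids $T$; for ``$\Leftarrow$'' it applies \cref{prop:odd_cycle_odd_faces} to an uncovered odd cycle $C$ and notes that no face inside $C$ can be joined to a face outside $C$ in $\vf(G)[\cF(G)\cup T]$. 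What each approach buys: the paper's argument is shorter and entirely combinatorial, sidestepping the face-merging correspondence that you correctly identify as the one piece of real work in your plan (and which does need care around cut vertices, deleted bridges, and edges of $G$ that become bridges of $G-T$); your argument, once that lemma is in place, gives a cleaner conceptual reading of \cref{def:transversal_of_face_set} --- the $\fodd(G)$-transversal condition says literally that every face of $G-T$ is even. Note finally that your closing equivalence (bipartite iff all faces even) is itself proved by the same symmetric-difference identity the paper uses, so the two proofs ultimately run on the same parity engine; yours just packages it through the quotient embedding. All the individual steps you outline do go through, including in the disconnected case, since the identity $E(C)=\bigoplus_{F\in\cF(C)}E(F)$ holds regardless of connectivity.
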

\begin{proof}
First we show ``$\Rightarrow$''.
Let $X \subseteq \cF(G) \cup T$ be a connected component of $\vf(G)[\cF(G) \cup T]$ containing an odd number of elements of $\fodd(G)$.
Then the symmetric difference $C := \bigoplus_{F \in X \cap \cF(G)} E(F)$ is odd and Eulerian and therefore contains some odd cycle.
However, if any edge $e \in E(G)$ is incident to some vertex in $T$, then the two faces adjacent to $e$ are connected in $\vf(G)[\cF(G) \cup T]$, so $e \notin E(C)$.
Thus, $V(C) \cap T = \emptyset$ and $T$ is no odd cycle transversal.

Next we show ``$\Leftarrow$''.
Assume there exists an odd cycle $C$ in $G$ with $V(C) \cap T = \emptyset$.
Let $\cF(C) \subseteq \cF(G)$ be the set of faces inside $C$ (w.r.t.\@ the planar embedding of $G$).
Clearly, no face in $\cF(C)$ can be connected to a face outside $\cF(C)$ in $\vf(G)[\cF(G) \cup T]$.
But $\cF(C)$ contains an odd number of elements of $\fodd(G)$ because $C = \bigoplus_{F \in \cF(C)} E(F)$ is odd.
Thus, one of the connected components of $\vf(G)[\cF(C) \cup T]$ must also contain an odd number of elements of $\fodd(G)$.
\end{proof}

\newpage

Our definition of $\cF$-transversals is closely related to the well-known notion of \emph{$T$-joins}:
\begin{definition}[$T$-join] Given a graph $G$ and a set $T \subseteq V(G)$, a $T$-join is a set $J \subseteq E(G)$ such that for any vertex $v \in V(G)$, $|\delta(v) \cap J|$ is odd if and only if $v \in T$.
\end{definition}

Clearly a $T$-join in $G$ exists if and only if each connected component of $G$ contains an even number of elements of $T$.
In particular, the definition of $\cF$-transversals from \cref{def:transversal_of_face_set} can be reformulated as $\vf(G)[\cF(G) \cup T]$ containing an $\cF$-join.

Using the fact that for $T, T' \subseteq V(G)$, a $T$-join $J$ and a $T'$-join $J'$, the symmetric difference $J\oplus J'$ is a $T\oplus T'$-join, we can show the following property of $\cF$-transversals:
\begin{proposition}\label{prop:symmetric_difference_of_transversals}
    Given a planar graph $G$ with a fixed planar embedding, let $\cF, \cF' \subseteq \cF(G)$ have even cardinality. Let $T\subseteq V(G)$ be an $\cF$-transversal and $T'\subseteq V(G)$ be an $\cF'$-transversal. Then $T\cup T'$ is an $\cF\oplus \cF'$-transversal.
\end{proposition}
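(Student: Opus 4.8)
The plan is to exploit the $T$-join reformulation of $\cF$-transversals recorded just before the statement: for a set $\mathcal{D} \subseteq \cF(G)$ of even cardinality, a vertex set $S \subseteq V(G)$ is a $\mathcal{D}$-transversal precisely when $\vf(G)[\cF(G) \cup S]$ contains a $\mathcal{D}$-join. So first I would note that $\cF \oplus \cF'$ has even cardinality, since $|\cF \oplus \cF'| = |\cF| + |\cF'| - 2|\cF \cap \cF'|$ and $|\cF|, |\cF'|$ are even, so the notion of $(\cF \oplus \cF')$-transversal is well-defined. Then, using the hypothesis, I would fix an $\cF$-join $J$ in $H := \vf(G)[\cF(G) \cup T]$ and an $\cF'$-join $J'$ in $H' := \vf(G)[\cF(G) \cup T']$.

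The key step is to view $J$ and $J'$ as edge sets of the common supergraph $\widehat{H} := \vf(G)[\cF(G) \cup T \cup T']$ and to check that they remain joins for the same target face sets there. Every edge of $\vf(G)$ connects a face to one of its boundary vertices, so any edge of $\widehat{H}$ incident to a vertex $v \in T' \setminus T$ is not an edge of $H$, hence not in $J$; therefore $|\delta_{\widehat H}(v) \cap J| = 0$ for such $v$. For every $w \in \cF(G) \cup T$ the edges of $\widehat H$ incident to $w$ that can lie in $J$ are exactly the edges of $H$ incident to $w$, so $|\delta_{\widehat H}(w) \cap J|$ has the same parity as $|\delta_{H}(w) \cap J|$. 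Thus $J$ is still an $\cF$-join when regarded as an edge set of $\widehat H$, and symmetrically $J'$ is an $\cF'$-join in $\widehat H$.

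Finally I would invoke the stated fact that the symmetric difference of a $T$-join and a $T'$-join is a $(T \oplus T')$-join: $J \oplus J'$ is an $(\cF \oplus \cF')$-join in $\widehat H$. Since a graph admits a $\mathcal{D}$-join if and only if every connected component contains an even number of elements of $\mathcal{D}$, it follows that every connected component of $\widehat H = \vf(G)[\cF(G) \cup (T \cup T')]$ contains an even number of elements of $\cF \oplus \cF'$, which is exactly the definition of $T \cup T'$ being an $(\cF \oplus \cF')$-transversal.

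I do not expect a real obstacle here; the only point requiring care is the middle step, namely verifying that enlarging the vertex set from $\cF(G) \cup T$ to $\cF(G) \cup T \cup T'$ does not disturb the join property of $J$. This works precisely because $\vf(G)$-edges only connect faces to their incident vertices, so $J$ cannot touch the newly added vertices of $T' \setminus T$; everything else is a direct application of the $T$-join facts recalled in the preliminaries.
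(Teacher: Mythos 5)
Your proposal is correct and follows essentially the same route as the paper: take an $\cF$-join $J$ in $\vf(G)[\cF(G)\cup T]$ and an $\cF'$-join $J'$ in $\vf(G)[\cF(G)\cup T']$, observe that $J\oplus J'$ lives in $\vf(G)[\cF(G)\cup(T\cup T')]$ and is an $(\cF\oplus\cF')$-join, and conclude. The only difference is that you spell out the (easy but worth noting) verification that $J$ remains an $\cF$-join after the vertex set is enlarged, which the paper leaves implicit.
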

\begin{proof}
    Let $J$ be an $\cF$-join contained in $\vf(G)[\cF(G)\cup T]$ and $J'$ be an $\cF'$-join contained in $\vf(G)[\cF(G)\cup T']$. Then $J\oplus J'$ is contained in $\vf(G)[\cF(G)\cup(T\cup T')]$. As $J\oplus J'$ is an $\cF\oplus \cF'$-join, this implies that $T\cup T'$ is an $\cF\oplus \cF'$-transversal.
\end{proof}

Similar to $\cF$-transversals, we define the packing number with respect to some face set:

\begin{definition}
 Given a planar graph $G$ with a set $\cF \subseteq \cF(G)$ of its faces, define $\nu(\cF)$ to be the maximum number of pairwise vertex-disjoint faces of $\cF$.
\end{definition}

\section{Proof of the Main Theorem}
\label{sec:thm}

\subsection{High-level outline}\label{sec:outline}

In this section we prove \cref{thm:main_thm}.
The outline roughly follows the proof of $\tau \leq 6 \nu$ by Kr\'al\textquoteright{}, Sereni and Stacho~\cite{KraSS12}.
They use the notion of \emph{clouds} of a planar graph $G$, which are the maximal subsets of $\fodd(G)$ that are connected in $\vf(G)[\fodd(G) \cup V(G)]$.
For any cloud, they compute a vertex set $T$ and a set $\cP$ of vertex-disjoint odd faces of the cloud such that $|T| \leq 6 |\cP| - 2$ and the whole cloud is contained in a single face of $G-T$.
They mark such a face as ``deadly''.
After applying this to each cloud, they use the following lemma to finish the proof:

\begin{lemma}[\cite{KraSS12}]\label{lemma:connect_clouds}
Let $G$ be a planar graph, embedded in the plane.
Let $\cF_\text{dead} \subseteq \cF(G)$ be a set of faces marked as \emph{deadly}, including all odd faces of $G$.
Then $\tau(G) \leq \nu_\text{dead}(G) + 2 |\cF_\text{dead}|$, where $\nu_\text{dead}(G)$ is the maximum number of pairwise vertex-disjoint odd cycles that are vertex-disjoint to all deadly faces in $G$.
\end{lemma}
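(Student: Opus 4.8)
The plan is to induct on $|\cF_\text{dead}|$, or more precisely on a carefully chosen quantity, and to use the $\cF$-transversal machinery from the preliminaries. The base case is when the only deadly faces are forced, e.g.\ when $\fodd(G) = \emptyset$: then $G$ is bipartite, $\tau(G) = 0$, and the bound holds trivially. For the inductive step, the key observation is that $\fodd(G)$ has even cardinality (since $E(G)$ as a whole, being a disjoint union of face boundaries, contributes an even count — every edge lies on exactly two faces), so $\cF_\text{dead}$ can be assumed to have even cardinality by adding at most one extra deadly face if needed; this costs only $2$ in the bound and can be absorbed. I would then pick a spanning structure — a spanning tree of an auxiliary ``dual-like'' graph on $\cF_\text{dead}$ — and pair up the deadly faces along it.

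First I would reduce to producing an $\fodd(G)$-transversal of size at most $\nu_\text{dead}(G) + 2|\cF_\text{dead}|$; by \cref{prop:vf_definition_of_transversal} this is exactly an odd cycle transversal. The strategy is: connect the deadly faces to each other using short paths in $\vf(G)$, so that after deleting a small transversal the odd faces become ``trapped.'' Concretely, build an auxiliary graph $H$ whose vertices are the deadly faces, with an edge between two deadly faces for each face of $G$ that is adjacent (in $\vf(G)$) to both — or more robustly, connect deadly faces that are close in the planar dual. Take a spanning forest of $H$; it has at most $|\cF_\text{dead}| - 1$ edges per component. Along each spanning-forest edge, route a path in $\vf(G)$ between the two deadly faces and add its interior $V(G)$-vertices to the transversal $T_0$; each such path can be taken vertex-disjoint enough (using planarity and the face-ordering) that the total is controlled, contributing roughly $|\cF_\text{dead}|$ vertices. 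After this, the deadly faces of each component all merge into one face of $G - T_0$, and since we included all odd faces among the deadly ones, every odd face is swallowed.

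Next I would handle the residual odd cycles. After deleting $T_0$, consider the graph $G' := G - T_0$. Any odd cycle in $G'$ must enclose an odd number of odd faces (\cref{prop:odd_cycle_odd_faces}); since all odd faces have been collapsed into the deadly ``super-faces,'' such a cycle must separate deadly super-faces from each other in a constrained way. The remaining odd cycles in $G'$ are exactly (up to the collapsing) odd cycles that are vertex-disjoint from all deadly faces, and these are bounded in transversal number by their packing number $\nu_\text{dead}(G)$ via the planar $\tau = \nu$ duality for the collapsed instance — here one invokes that in a planar graph where a designated set of faces has been contracted, the minimum $T$-join / maximum disjoint-cut min-max relation (equivalently, the LP integrality of odd-cut packing in planar graphs, i.e.\ the Seymour-type result) gives an $\fodd(G')$-transversal of size $\nu_\text{dead}(G)$. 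Combining, $T = T_0 \cup T_1$ with $|T_1| \le \nu_\text{dead}(G)$ and $|T_0| \le 2|\cF_\text{dead}|$ yields the claim. (The extra factor of $2$ rather than $1$ on $|\cF_\text{dead}|$ absorbs both the parity fix-up and the fact that spanning-forest paths may need to be counted with multiplicity when deadly faces cluster.)

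The main obstacle I expect is making the path-routing step both vertex-disjoint and cheap simultaneously: naively connecting $|\cF_\text{dead}|$ faces pairwise in $\vf(G)$ could reuse vertices or require long detours, blowing the bound past $2|\cF_\text{dead}|$. The right fix is probably to work in the planar dual and choose the connecting paths to be a \emph{nested} or \emph{laminar} family (exploiting that deadly faces, having swallowed clouds, can be linearly ordered around the sphere after a suitable contraction), so that each added path contributes $O(1)$ new transversal vertices and the interiors are genuinely disjoint. A secondary subtlety is ensuring the final min-max step for $\nu_\text{dead}$ is applied to a genuinely planar instance: contracting each deadly super-face to a single vertex preserves planarity, so the classical planar odd-cycle Erd\H{o}s--P\'osa-with-ratio-$1$ (equivalently $T$-join/$T$-cut duality of Seymour) applies directly there.
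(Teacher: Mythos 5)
First, a point of order: the paper does not prove this lemma at all --- it is imported verbatim from Kr\'al\textquoteright{}, Sereni and Stacho~\cite{KraSS12} and used as a black box, so there is no in-paper proof to compare against. Judged on its own terms, your proposal has two genuine gaps.

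The first and most serious gap is the path-routing step. You assert that the deadly faces can be joined along a spanning forest by paths in $\vf(G)$ whose interior $V(G)$-vertices total about $|\cF_\text{dead}|$, ``each added path contributing $O(1)$ new transversal vertices.'' Nothing forces this: two deadly faces in the same component can be at arbitrarily large distance in $\vf(G)$, so a connecting path can have arbitrarily many interior vertices, and no choice of nested or laminar routing changes that. The only way such a lemma can be true is that a \emph{long} shortest connection between two deadly faces certifies proportionally many pairwise disjoint odd cycles that avoid all deadly faces, so that the cost of long paths is charged against the $\nu_\text{dead}(G)$ term rather than the $2|\cF_\text{dead}|$ term. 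Your accounting never makes this trade-off --- you spend the entire $\nu_\text{dead}(G)$ budget on a separate ``residual'' step --- so the bound $|T_0|\le 2|\cF_\text{dead}|$ is simply unjustified, and with it the whole argument.

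The second gap is the residual step. You hit the odd cycles of $G-T_0$ by invoking an exact planar min-max (``$\tau=\nu$ duality,'' Seymour's $T$-join/$T$-cut integrality). No such exact \emph{vertex} min-max exists for odd cycles in planar graphs: $K_4$ has packing number $1$ and transversal number $2$, and indeed the entire point of this paper is that the best known ratio lies between $2$ and $4$. Seymour's theorem concerns the edge version (which is why the edge Erd\H{o}s-P\'osa ratio is $2$, as noted in the introduction), and contracting deadly faces does not convert the vertex problem into an edge problem. Moreover, the odd cycles surviving the deletion of $T_0$ need not be vertex-disjoint from the deadly faces --- they only avoid the interiors of your connecting paths --- so they are not even the family whose packing number $\nu_\text{dead}(G)$ counts. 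As it stands the proposal establishes neither of the two summands in the claimed bound.
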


For our approach it will be convenient to generalize the notion of clouds:

\begin{definition}\label{def:clouds}
 Let $G$ be a planar graph, embedded in the plane, and  $\cF^* \subseteq \cF(G)$ with $\fodd(G) \subseteq \cF^*$ define a set of \emph{special} faces of $G$.
 An \emph{$\cF^*$-cloud} of $G$ is a maximal set $\cW \subseteq \cF^*$ that is connected in $\vf(G)[\cF^* \cup V(G)]$, i.e., $\cW$ can be written as $\cW = X \cap \cF^*$, where $X$ is a connected component of $\vf(G)[\cF^* \cup V(G)]$.
\end{definition}

Instead of finding any packing of odd cycles in $G$, we will work with packings with a special structure:

\begin{definition}\label{def:special_packing}
Let $G$ be a planar graph, embedded in the plane.
Let $\cF^* \subseteq \cF(G)$ with $\fodd(G) \subseteq \cF^*$ be a set of special faces of $G$.
A set $\cP$ is called a \emph{special packing} for $G$ if each $C \in \cP$ is either a special face $C \in \cF^*$ or $C$ is an odd cycle in $G$ with $V(C) \cap F = \emptyset$ for all $F \in \cF^*$, such that any $C, C^\prime \in \cP$ are vertex-disjoint, i.e., $V(C) \cap V(C^\prime) = \emptyset$.
\end{definition}

Note that for $\cF^* = \fodd(G)$ any special packing for $G$ can be transformed into an odd cycle packing (i.e., a set of pairwise vertex-disjoint odd cycles) of the same size by replacing each $F \in \cF^* = \fodd(G)$ by some odd cycle in $E(F)$.

We will show that we can always find a special packing for $G$ and an odd cycle transversal of at most four times the size.
Our algorithm works as follows:
Observe that in the case where each $\cF^*$-cloud consists of a single face, Lemma~\ref{lemma:connect_clouds} directly yields a special packing and an odd cycle transversal of at most twice the size.
By a simple observation, we still get an odd cycle transversal of at most four times the size of a special packing in the case where any $\cF^*$-cloud $\cW$ has packing number $\nu(\cW) = 1$.
In the case of an $\cF^*$-cloud $\cW$ with packing number $\nu(\cW) > 1$ however, we will find a special face in $\cW$ with few neighbours, ``merge'' it with all its neighbours (and possibly one additional face) and recurse on the smaller instance.
We prove the existence of such a special face in $\cW$ in \cref{sec:cloud_lemma}.

\subsection{Detailed proof}
We first define a simple way to merge a set of faces into one face, without modifying the structure of the graph.

\begin{definition}\label{def:merging}
Let $G$ be a planar graph, embedded in the plane.
Let $F_1, F_2$ be faces of $G$ and $v \in V(F_1)\cap V(F_2)$.
A graph obtained after \emph{merging} the faces $F_1$ and $F_2$ along $v$ can be constructed as follows:
Let $e_1, \dots, e_k \in E(G)$ be the edges incident to $v$, ordered counterclockwise around $v$ (for an example, see \Cref{fig:merging}).
Let $j_1, j_2 \in \{1, \dots, k\}$ such that $F_i$ lies between $e_{j_i}$ and $e_{j_i+1}$ for $i=1,2$ (where we set $k + 1 := 1$).
W.l.o.g. $j_1 < j_2$.
Now split the vertex $v$ into two vertices $v^\prime$ and $v^{\prime\prime}$, connect the edges $e_{j_1 + 1}, \dots, e_{j_2}$ to $v^\prime$ and the remaining edges from $\delta_G(v)$ to $v^{\prime\prime}$.
\end{definition}

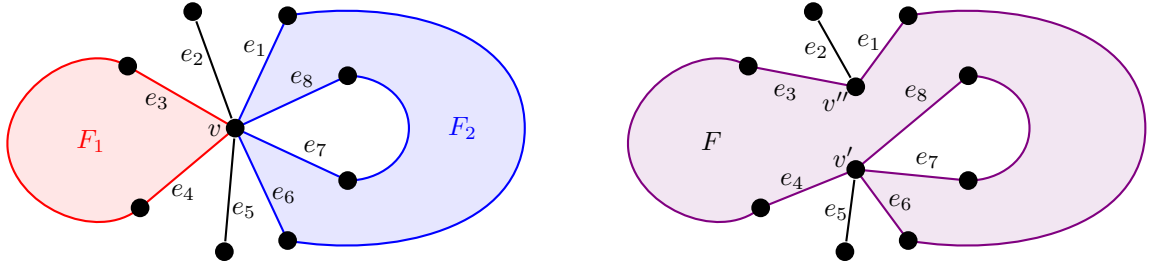
\begin{figure}
    \begin{center}
        \begin{tikzpicture}[scale=0.55, thick]
 \colorlet{color1}{red}
 \colorlet{color2}{blue}
 \colorlet{color3}{violet}
 \draw[color1, fill=color1!10] (0,0) to (150:3) to[out=150, in=95] (-5.5,-0.5) to[out=-85, in=220] (220:3) to (0,0);
 \draw[color2, fill=color2!10] (0,0) to (-25:3) to[out=-0, in=-90] (4.2,0) to[out=90, in=0] (25:3) to (0,0) to (65:3) to[out=10, in=90] (7,0) to[out=-90,in=-10] (-65:3) to (0,0);
 \node[fill, circle, inner sep=2.5, color=black] (v) at (0,0) {};
 \node[fill, circle, inner sep=2.5, color=black] (v1) at (65:3) {};
 \node[fill, circle, inner sep=2.5, color=black] (v2) at (110:3) {};
 \node[fill, circle, inner sep=2.5, color=black] (v3) at (150:3) {};
 \node[fill, circle, inner sep=2.5, color=black] (v4) at (220:3) {};
 \node[fill, circle, inner sep=2.5, color=black] (v5) at (-95:3) {};
 \node[fill, circle, inner sep=2.5, color=black] (v6) at (-65:3) {};
 \node[fill, circle, inner sep=2.5, color=black] (v7) at (-25:3) {};
 \node[fill, circle, inner sep=2.5, color=black] (v8) at (25:3) {};
 \draw (v) -- (v2);
 \draw (v) -- (v5);
 \node at (76:2) {$e_1$};
 \node at (121:2) {$e_2$};
 \node at (161:2) {$e_3$};
 \node at (231:2) {$e_4$};
 \node at (-84:2) {$e_5$};
 \node at (-54:2) {$e_6$};
 \node at (-14:2) {$e_7$};
 \node at (36:2) {$e_8$};
 \node at (185:0.5) {$v$};
 \node[color=color1] at (185:3.5) {$F_1$};
 \node[color=color2] at (0:5.5) {$F_2$};
 \begin{scope}[shift={(15,0)}]
  \draw[color3, fill=color3!10] (0,1) -- (150:3) to[out=150, in=95] (-5.5,-0.5) to[out=-85, in=220] (220:3) to (0,-1) to (25:3) to[out=0,in=90] (4.2,0) to[out=-90, in=0] (-25:3) -- (0,-1) -- (-65:3) to[out=-10,in=-90] (7,0) to[out=90,in=10] (65:3) -- (0,1);
  \node[fill, circle, inner sep=2.5, color=black] (vp) at (0,1) {};
  \node[fill, circle, inner sep=2.5, color=black] (vpp) at (0,-1) {};
 \node[fill, circle, inner sep=2.5, color=black] (v1) at (65:3) {};
 \node[fill, circle, inner sep=2.5, color=black] (v2) at (110:3) {};
 \node[fill, circle, inner sep=2.5, color=black] (v3) at (150:3) {};
 \node[fill, circle, inner sep=2.5, color=black] (v4) at (220:3) {};
 \node[fill, circle, inner sep=2.5, color=black] (v5) at (-95:3) {};
 \node[fill, circle, inner sep=2.5, color=black] (v6) at (-65:3) {};
 \node[fill, circle, inner sep=2.5, color=black] (v7) at (-25:3) {};
 \node[fill, circle, inner sep=2.5, color=black] (v8) at (25:3) {};
 \draw (vp) -- (v2);
 \draw (vpp) -- (v5);
 \node at (81:2.1) {$e_1$};
 \node at (117:2.1) {$e_2$};
 \node at (153:1.9) {$e_3$};
 \node at (220:2) {$e_4$};
 \node at (-103:2.1) {$e_5$};
 \node at (-59:2.1) {$e_6$};
 \node at (-24:1.9) {$e_7$};
 \node at (30:1.7) {$e_8$};
 \node at (-0.48,0.65) {$v^{\prime\prime}$};
 \node at (-0.3,-0.7) {$v^\prime$};
 \node at (185:3.5) {$F$};
 \end{scope}
\end{tikzpicture}
    \end{center}
    \caption{
        The left picture shows the faces $F_1$ and $F_2$, which meet at the vertex $v$, before merging.
        The edges incident to $v$ are ordered counterclockwise.
        Note that $j_1=3$, while for $j_2$ there are two options: $j_2=6$ and $j_2=8$. \\
        The right picture shows the graph obtained after merging $F_1$ and $F_2$ along $v$ for the choice $j_2 = 8$.
        The large face $F$ corresponds to the face set $\{F_1, F_2\}$.
    }
    \label{fig:merging}
\end{figure}

Let $G^\prime$ be the graph obtained by merging the faces $F_1$ and $F_2$ along $v$.
It is clear that $G^\prime$ contains exactly one face less than $G$:
All faces except for $F_1$ and $F_2$ remain unchanged, and $G^\prime$ contains one other face $F$ with $E(F) = E(F_1) \oplus E(F_2)$.
We say that $F$ corresponds to the face set $\{F_1, F_2\}$.
Also, all vertices except for $v$ remain unchanged, and we say that both $v^\prime$ and $v^{\prime\prime}$ in $V(G^\prime)$ correspond to $v \in V(G)$.
See \Cref{fig:merging}.

Given a set $\cF \subseteq \cF(G)$ of faces that are connected in $\vf(G)[\cF \cup V(G)]$, we can define a graph obtained after merging all faces in $\cF$ as follows:
Start by setting $\cF' := \cF$.
While $|\cF'| > 1$, choose two faces $F_1, F_2 \in \cF'$ and merge them via Definition~\ref{def:merging}.
Delete both $F_1$ and $F_2$ from $\cF'$ and add the new face in the constructed graph that corresponds to $\{F_1, F_2\}$ to $\cF'$.
In the graph resulting from all these merging operations, all faces except for the faces in $\cF$ remain unchanged and the faces from $\cF$ are replaced by exactly one additional face that corresponds to $\cF$.

\begin{lemma}
    \label{lem:induction_step}
    Let $G$ be a planar graph, embedded in the plane, and $\cF \subseteq \cF(G)$ a subset of its faces that is connected in $\vf(G)[\cF \cup V(G)]$.
    Let $G^\prime$ be a graph arising from $G$ when merging all faces of $\cF$ and let $T$ be an odd cycle transversal for $G^\prime$.
    Let $T_G \subseteq V(G)$ be constructed by replacing each $v \in T$ by the vertex in $G$ corresponding to $v$.
    Then there exists a subset $\cF'\subseteq \cF$ of even cardinality such that for any $\cF'$-transversal $T^\prime$, $T_G \cup T^\prime$ defines an odd cycle transversal for $G$.
\end{lemma}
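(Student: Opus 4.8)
The plan is to relate odd cycles in $G$ to odd cycles in $G'$ via the face sets they enclose, and to use the fact that merging faces changes the enclosed-face structure only through the single new face $F$ corresponding to $\cF$. Fix planar embeddings of $G$ and $G'$ compatible with the merging process. I would first observe that every odd face of $G'$ other than $F$ is an odd face of $G$, and that $F$ is odd if and only if $|E(F)| = |\bigoplus_{F_0 \in \cF} E(F_0)|$ is odd, i.e. if and only if $\cF$ contains an odd number of odd faces. I would then define $\cF' := \fodd(G) \cap \cF$, which is exactly the set of odd faces of $G$ that get swallowed into $F$; note that $\fodd(G) = (\fodd(G') \setminus \{F\}) \cup \cF'$ as a disjoint union (disjoint because $F \notin \cF(G)$), and that $|\fodd(G)|$ is even (it is the number of odd faces, always even by \cref{prop:odd_cycle_odd_faces} applied to a face bounding cycle, or since $\bigoplus_F E(F) = \emptyset$), hence $|\cF'| \equiv |\fodd(G') \setminus \{F\}| \pmod 2$; combined with $|\fodd(G')|$ even this shows $|\cF'|$ is even precisely when $F \notin \fodd(G')$, and when $F \in \fodd(G')$ we have $|\cF'|$ odd — so this naive choice does not always have even cardinality and must be adjusted.

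To fix the parity I would instead set $\cF' := \fodd(G) \cap \cF$ when $F \notin \fodd(G')$, and $\cF' := (\fodd(G) \cap \cF) \cup \{F_\star\}$ for a suitable additional face $F_\star \in \cF \setminus \fodd(G)$ when $F \in \fodd(G')$ — or, more cleanly, I would phrase the whole argument in terms of symmetric differences of $T$-joins and let $\cF'$ be whatever even-cardinality subset of $\cF$ falls out. Concretely: by \cref{prop:vf_definition_of_transversal}, $T$ being an odd cycle transversal for $G'$ means $T$ is an $\fodd(G')$-transversal, i.e. $\vf(G')[\cF(G') \cup T]$ contains an $\fodd(G')$-join. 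The key structural claim is that $\vf(G')$ with $F$ contracted back (identifying $F$ with the set $\cF$ and re-gluing $v', v''$ to $v$ etc.) maps naturally to a subgraph of $\vf(G)$: any edge or path of $\vf(G')$ avoiding $F$ persists in $\vf(G)$ after translating vertices via the correspondence, and $F$'s incidences are exactly the union of the incidences of the faces in $\cF$. Transporting the $\fodd(G')$-join $J'$ through this map yields an edge set $J_G$ in $\vf(G)[\cF(G) \cup T_G]$ whose "boundary" is $(\fodd(G') \setminus \{F\})$ on the vertex side, plus, at each vertex of $\vf(G)$ that was split, a contribution that is controlled; the upshot is that $T_G$ is an $\cF''$-transversal for $\cF'' := \fodd(G') \triangle \{F\} = \fodd(G') \setminus \{F\}$ if $F \in \fodd(G')$, or $\fodd(G') \cup$ (handled symmetrically). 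Either way $T_G$ is an $\cF''$-transversal for some set $\cF''$ with $\cF'' \triangle \fodd(G) \subseteq \cF$, and I set $\cF' := \cF'' \triangle \fodd(G)$, which has even cardinality since both $\cF''$ and $\fodd(G)$ do.

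Finally I would invoke \cref{prop:symmetric_difference_of_transversals}: if $T'$ is any $\cF'$-transversal, then $T_G \cup T'$ is a $(\cF'' \triangle \cF')$-transversal $= \fodd(G)$-transversal, which by \cref{prop:vf_definition_of_transversal} is exactly an odd cycle transversal for $G$, completing the proof. The main obstacle is the structural claim in the previous paragraph — carefully verifying that an $\fodd(G')$-join in $\vf(G')$ pushes forward to a join in $\vf(G)$ with the predicted parity set, handling the vertex split $v \mapsto \{v', v''\}$ (the two copies may each receive odd $J'$-degree, but their degrees sum correctly at $v$) and the replacement of the star of $F$ by the union of stars of $\cF$. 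I expect this to reduce to a short parity computation once the correspondence between $\vf(G')$ and $\vf(G)$ is set up precisely, using that a $T$-join exists iff every component has an even number of $T$-vertices and that merging does not disconnect the relevant incidence structure.
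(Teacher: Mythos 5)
Your overall skeleton agrees with the paper's: find a set $\cF''$ such that $T_G$ is an $\cF''$-transversal and $\cF''\oplus\fodd(G)\subseteq\cF$ has even cardinality, set $\cF':=\cF''\oplus\fodd(G)$, and conclude via \cref{prop:symmetric_difference_of_transversals} and \cref{prop:vf_definition_of_transversal} exactly as in your final paragraph. However, the step that actually carries the lemma --- producing such an $\cF''$ --- is precisely the part you leave unverified (your ``main obstacle''), so as written this is a gap rather than a proof. Two remarks on your attempts to fill it. First, your initial candidate $\cF'=\fodd(G)\cap\cF$ fails for a reason deeper than parity: the correct $\cF'$ must depend on $T$. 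For example, let $\cF=\{F_2,F_3\}$ be two even faces meeting at $u$, with odd faces $F_1$ and $F_4$ meeting $F_2$ at $w$ and $F_3$ at $x$, respectively. Then $T=\{w,x\}$ is an odd cycle transversal of $G'$, the set $\fodd(G)\cap\cF=\emptyset$ already has even cardinality, yet $T_G=\{w,x\}$ is not an $\fodd(G)$-transversal (the component $\{F_1,w,F_2\}$ contains exactly one odd face); the correct choice here is $\cF'=\{F_2,F_3\}$. Second, the join-pushforward you sketch can presumably be completed, but it forces you to choose, for every join edge incident to the merged face, which face of $\cF$ to reassign it to, and to track cancellations and repeated splittings of a single vertex of $G$; none of this is carried out, and the parity of the resulting degree at each individual face of $\cF$ (as opposed to their sum) is exactly the content you would need to control.

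The paper sidesteps all of this with a short component argument that you could substitute for your structural claim: if $X$ is a connected component of $\vf(G)[\cF(G)\cup T_G]$ with $X\cap\cF=\emptyset$, then also $X\cap V(\cF)=\emptyset$ (any $t\in T_G$ lying on a face of $\cF$ would put that face into $X$), so $X$ is unaffected by the merging and is also a component of $\vf(G')[\cF(G')\cup T]$; since $T$ is an odd cycle transversal for $G'$, such an $X$ already contains an even number of odd faces. Hence only components meeting $\cF$ can be unbalanced, and defining $\cF'$ by picking one face of $X\cap\cF$ from each component $X$ with $|X\cap\fodd(G)|$ odd makes $T_G$ an $(\fodd(G)\oplus\cF')$-transversal directly from \cref{def:transversal_of_face_set}, with no join ever being transported. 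With that observation in place of your pushforward claim, your concluding paragraph goes through verbatim.
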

\begin{proof}
Given a connected component $X$ of $\vf(G)[\cF(G) \cup T_G]$ with $X \cap \cF = \emptyset$, clearly also $X \cap V(\cF) = \emptyset$.
Thus, $X$ is also a connected component of $\vf(G^\prime)[\cF(G^\prime) \cup T]$ and therefore $|X \cap \fodd(G)|$ is already even.

Thus, we can construct a set $\cF^\prime \subseteq \cF$ by adding exactly one element from each connected component $X$ of $\vf(G)[\cF(G) \cup T_G]$ with $|X \cap \fodd(G)|$ odd.
In particular, each connected component of $\vf(G)[\cF(G) \cup T_G]$ contains an even number of elements of $\fodd(G) \oplus \cF^\prime$ and therefore $T_G$ is an $\fodd(G) \oplus \cF'$-transversal.

Now let $T^\prime \subseteq V(G)$ be an $\cF'$-transversal. According to \cref{prop:symmetric_difference_of_transversals}, $T_G \cup T^\prime$ is an $\fodd(G)$-transversal because $\fodd(G)\oplus \cF^\prime\oplus \cF^\prime = \fodd(G)$.
By \cref{prop:vf_definition_of_transversal}, $T_G \cup T^\prime$  hence is an odd cycle transversal for $G$.
\end{proof}

The following two lemmata show structural properties of $\cF^*$-clouds.
We defer their proofs to \cref{sec:cloud_lemma}.
\begin{restatable*}{lemma}{packingnumber}
	\label{lem:packing_number1}
	Let $G$ be a planar graph, embedded in the plane.
	Let $\cF \subseteq \cF(G)$ with $\nu(\cF) = 1$. Then for any $\cF^\prime \subseteq \cF$ of even cardinality we have $\tau(\cF^\prime) \leq 2$.
\end{restatable*}

\begin{restatable*}{lemma}{cloudlemma}
	\label{lem:clouds}
	Let $G$ be a planar graph, embedded in the plane, and $\cF^* \subseteq \cF(G)$ with $\fodd(G) \subseteq \cF^*$ a set of special faces.
	Let $\cW$ be an $\cF^*$-cloud of $G$ with $\nu(\cW) > 1$.
	Then there exists a set $\cF \subseteq \cW$ such that
	\begin{enumerate}[label=(\roman*)]
		\item \label{item:connected} $\cF$ is connected in $\vf(G)[\cF \cup V(G)|$ and
		\item \label{item:packingnumber} $\nu(\cF) > 1$ and
		\item \label{item:centralface} there is a face $F\in \cF$ such that all $F^\prime \in \cW$ where $V(F)\cap V(F') \neq \emptyset$ are also contained in $\cF$ and
		\item \label{item:transversalnumber} for any even-cardinality subset $\cF'\subseteq \cF$, we have $\tau(\cF') \leq 4$.
	\end{enumerate}
\end{restatable*}

We are now ready to show \cref{thm:main_thm}.
As explained in \cref{sec:outline}, we will show an even stronger statement by requiring a special packing:

\begin{theorem}\label{thm:main_thm_special}
Let $G$ be a planar graph, embedded in the plane.
Let $\cF^* \subseteq \cF(G)$ with $\fodd(G) \subseteq \cF^*$ be a set of special faces.
There exists a special packing $\cP^*$ for $G$ and an odd cycle transversal $T^*$ for $G$ such that $|T^*| \leq 4 |\cP^*|$.
\end{theorem}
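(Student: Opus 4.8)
The plan is to prove \cref{thm:main_thm_special} by strong induction on $|\cF^*|$, splitting into two cases according to the packing numbers of the $\cF^*$-clouds of $G$. Since an $\cF^*$-cloud is a maximal connected set of $\vf(G)[\cF^*\cup V(G)]$, faces lying in different clouds are automatically vertex-disjoint. If every $\cF^*$-cloud $\cW$ has $\nu(\cW)=1$, I argue directly (this subsumes $\cF^*=\emptyset$, where $G$ is bipartite and $\cP^*=T^*=\emptyset$ works, as well as every instance with $|\cF^*|\le 1$). If some cloud $\cW$ has $\nu(\cW)>1$, I shrink the instance by merging part of $\cW$ into a single face and apply the induction hypothesis to the strictly smaller instance.

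\emph{Base case: $\nu(\cW)=1$ for all clouds.} List the clouds $\cW_1,\dots,\cW_M$; each is connected in $\vf(G)[\cW_j\cup V(G)]$, so I merge each $\cW_j$ into a single face $\hat F_j$, producing a graph $G'$, and set $\cF^{*\prime}:=\{\hat F_1,\dots,\hat F_M\}$. I check that $\fodd(G')\subseteq\cF^{*\prime}$, that merging one cloud leaves the others untouched, and that the $\hat F_j$ are pairwise vertex-disjoint, so every $\cF^{*\prime}$-cloud is a single face. Marking $\cF^{*\prime}$ as deadly, \cref{lemma:connect_clouds} gives an odd cycle transversal $T'$ of $G'$ with $|T'|\le\nu_{\text{dead}}(G')+2M$, and $\nu_{\text{dead}}(G')$ is realised by a maximum family $\mathcal D$ of pairwise vertex-disjoint odd cycles of $G'$ avoiding all $\hat F_j$. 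Put $\cP^{*\prime}:=\{\hat F_1,\dots,\hat F_M\}\cup\mathcal D$, so $|\cP^{*\prime}|=M+|\mathcal D|$. I lift back to $G$ one merged cloud at a time: \cref{lem:induction_step} turns $T'$ into an odd cycle transversal of $G$ at the cost of one $\cF'$-transversal per merge for some even $\cF'\subseteq\cW_j$, and each of these has size at most $2$ by \cref{lem:packing_number1} (since $\nu(\cW_j)=1$), so the resulting $T^*$ satisfies $|T^*|\le|T'|+2M$. For the packing I lift $\cP^{*\prime}$ by replacing each $\hat F_j$ with an arbitrary face of $\cW_j$ and keeping the cycles of $\mathcal D$ (which use no split vertices, as those lie on the boundaries of the $\hat F_j$), obtaining a special packing $\cP^*$ with $|\cP^*|=M+|\mathcal D|$. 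Hence $|T^*|\le\nu_{\text{dead}}(G')+4M=|\mathcal D|+4M\le 4(M+|\mathcal D|)=4|\cP^*|$.

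\emph{Inductive case.} Apply \cref{lem:clouds} to $\cW$ to obtain $\cF\subseteq\cW$ satisfying properties (i)--(iv); note $|\cF|\ge 2$ by (ii). Merging $\cF$ (allowed by (i)) into one face $F_\cF$ gives $G'$; set $\cF^{*\prime}:=(\cF^*\setminus\cF)\cup\{F_\cF\}$, which contains $\fodd(G')$ and has $|\cF^{*\prime}|=|\cF^*|-|\cF|+1<|\cF^*|$. By the induction hypothesis there are a special packing $\cP'$ and an odd cycle transversal $T'$ of $G'$ with $|T'|\le 4|\cP'|$. Using \cref{lem:induction_step} to lift $T'$ and property (iv) to bound the extra vertices by $4$, I obtain an odd cycle transversal $T^*$ of $G$ with $|T^*|\le|T'|+4$. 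To produce a special packing $\cP^*$ of $G$ with $|\cP^*|=|\cP'|+1$, I distinguish whether $F_\cF\in\cP'$: if yes, I delete $F_\cF$ and insert two vertex-disjoint faces of $\cF$ (these exist by (ii) and stay disjoint from the rest of $\cP'$, because the rest was disjoint from $F_\cF$ in $G'$ and hence avoids the split vertices, which lie on $\partial F_\cF$); if no, I keep $\cP'$ and add the face $F$ of property (iii), which is disjoint from every face of $\cW\setminus\cF$ by (iii), from faces of other clouds trivially, and from the odd cycles of $\cP'$ because those avoid $\cF^*\supseteq\cF$. Then $|T^*|\le|T'|+4\le 4(|\cP'|+1)=4|\cP^*|$, closing the induction.

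The step I expect to be the main obstacle is the bookkeeping forced by the merging operation, which splits vertices, so $V(G')\ne V(G)$. One has to verify: (a) every split vertex ends up on the boundary of the new face, so that an odd cycle of $G'$ avoiding that face uses no split vertex and lifts verbatim to an odd cycle of $G$ that still avoids $\cF^*$; (b) vertex-disjointness of the packing elements one keeps survives the passage back to $G$ --- here it matters that in the ``$F_\cF\in\cP'$'' case the kept elements never touch a split vertex, while in the other case one leans on properties (ii) and (iii) of \cref{lem:clouds} to still reach a packing of size $|\cP'|+1$; and (c) merging one cloud does not disturb the remaining clouds or their packing numbers, which is what licenses the one-cloud-at-a-time lifting in the base case. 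The remaining points --- that $|\cF^*|$ strictly decreases so the recursion terminates, and that every face set handed to \cref{lemma:connect_clouds}, \cref{lem:induction_step}, and \cref{lem:packing_number1} has the required even cardinality and connectivity --- are routine.
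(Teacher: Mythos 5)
Your proposal is correct and follows essentially the same route as the paper: the same base case via \cref{lemma:connect_clouds}, \cref{lem:induction_step} and \cref{lem:packing_number1}, and the same inductive step via \cref{lem:clouds} with the identical two-case analysis on whether $F_\cF$ lies in the recursively obtained packing. The only cosmetic difference is that you induct on $|\cF^*|$ while the paper inducts on $|\cF(G)|$; your explicit bookkeeping of split vertices is a detail the paper leaves implicit but is handled correctly.
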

\begin{proof}
We prove the theorem by induction on the number of faces $|\cF(G)|$.
Let $\clouds$ be the set of $\cF^*$-clouds of $G$.

As the base case of our induction we consider the case where $\nu(\cW) = 1$ for all clouds $\cW \in \clouds$.
Let $G^\prime$ arise from $G$ by merging the faces of $\cW$ for each $\cF^*$-cloud $\cW \in \clouds$.
Then, $G^\prime$ contains exactly one face per $\cF^*$-cloud $\cW \in \clouds$ and one face per non-special face of $G$.
We mark the faces corresponding to the $\cF^*$-clouds as ``deadly''.

By applying \cref{lemma:connect_clouds}, we can find an odd cycle transversal $T$ for $G^\prime$ and a set $\cP$ of pairwise vertex-disjoint odd cycles of $G^\prime$ such that $|T| \leq |\cP| + 2|\clouds|$ and no cycle in $\cP$ contains a vertex of a deadly face of $G^\prime$.
In particular, $\cP$ defines a special packing for $G^\prime$ and thus also for $G$.
By adding exactly one face of each $\cW \in \clouds$ to $\cP$ we get a special packing $\cP^*$ for $G$ with $|\cP^*| = |\cP| + |\clouds|$.

Let $T_G \subseteq V(G)$ arise by replacing each $v \in T$ by the vertex of $V(G)$ corresponding to $v$.
By applying \cref{lem:induction_step} and \cref{lem:packing_number1} to each $\cF^*$-cloud $\cW \in \clouds$, we can find a set $T_\cW$ with
$|T_\cW| \leq 2$ for each $\cW \in \clouds$ such that $T^* := T_G \cup \bigcup_{\cW \in \clouds} T_\cW$ is an odd cycle transversal for $G$ with
\[|T^*| \leq |T| + 2 |\clouds| \leq |\cP| + 4 |\clouds| \leq 4 |\cP^*|.\]
This concludes the case where $\nu(\cW) = 1$ for all $\cW \in \clouds$.

Next, assume that there exists an $\cF^*$-cloud $\cW \in \clouds$ with $\nu(\cW) > 1$.
Choose a subset $\cF \subseteq \cW$ as given by \cref{lem:clouds}.
Let $G^\prime$ arise from $G$ by merging the faces in $\cF$.
Note that $G'$ has fewer faces than $G$ because $\nu(\cF) > 1$ implies $|\cF| > 1$.
The set $\cF^{\prime *} \subseteq \cF(G^\prime)$ of special faces of $G^\prime$ can be constructed from $\cF^*$ by deleting the faces in $\cF$ and replacing them by the face $F_\cF$ of $G^\prime$ that corresponds to $\cF$.

By the induction hypothesis, we can find an odd cycle transversal $T$ and a special packing $\cP$ for $G^\prime$ such that $|T| \leq 4 |\cP|$.
Let $T_G \subseteq V(G)$ arise by replacing each $v \in T$ by the vertex of $V(G)$ corresponding to $v$.
By the properties of $\cF$ and Lemma~\ref{lem:induction_step} we can find a set $T_\cF \subseteq V(G)$ with $|T_\cF| \leq 4$ such that $T^* := T_G \cup T_\cF$ is an odd cycle transversal for $G$.
We will show how to find a special packing $\cP^*$ for $G$ with $|\cP^*| = |\cP| + 1$. This will finish the proof.

Clearly, $\cP \setminus \{F_\cF\}$ defines a special packing for $G$.
We consider two cases:
\begin{description}
    \item[Case 1:] $F_\cF \in \cP$.
    Since $\nu(\cF) \geq 2$, we can replace $F_\cF$ by two vertex-disjoint deadly faces of $\cF$ to get a special packing for $G$.
    \item[Case 2:] $F_\cF \notin \cP$.
    In this case $\cP$ already defines a special packing for $G$. Also, by \Cref{lem:clouds} \ref{item:centralface} there is a face $F \in \cF$ such that all special faces $F^\prime$ with $V(F) \cap V(F^\prime) \neq \emptyset$ are also contained in $\cF$.
    In particular, $\cP \cup \{F\}$ is a special packing for $G$. \qedhere
\end{description}
\end{proof}

As noted above, for $\cF^* := \fodd(G)$ any special packing for $G$ can be transformed into an odd cycle packing of the same size.
Furthermore, all steps in the proof of \cref{thm:main_thm_special} are constructive and can be carried out in polynomial time, including embedding $G$ in the plane (\cite{ChiNAO85}).
Thus we get \cref{thm:main_thm} as a corollary:

\begin{corollary}
	Let $G$ be an undirected planar graph.
	Then there exists an odd cycle transversal $T$ and a set $\cP$ of pairwise vertex-disjoint odd cycles in $G$ with $|T|\leq 4|\cP|$.
	$T$ and $\cP$ can be computed in polynomial time.
\end{corollary}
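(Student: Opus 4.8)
The plan is to obtain the corollary as an immediate consequence of \cref{thm:main_thm_special}. First I would fix a planar embedding of $G$; by~\cite{ChiNAO85} such an embedding is computable in linear time (if $G$ is disconnected, embed each connected component in its own region of the plane). With this embedding in hand, set $\cF^* := \fodd(G)$, which trivially satisfies $\fodd(G) \subseteq \cF^*$, and apply \cref{thm:main_thm_special}. This produces a special packing $\cP^*$ for $G$ and an odd cycle transversal $T^*$ for $G$ with $|T^*| \le 4 |\cP^*|$, where $T^*$ is a genuine odd cycle transversal by \cref{prop:vf_definition_of_transversal}.

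Next I would convert the special packing $\cP^*$ into an honest odd cycle packing $\cP$ of the same cardinality, following the remark made just before \cref{thm:main_thm_special}. By \cref{def:special_packing} applied with $\cF^* = \fodd(G)$, each element of $\cP^*$ is either an odd cycle of $G$ that avoids every odd face, or an odd face $F \in \fodd(G)$; in the latter case $E(F)$ is Eulerian of odd size and hence (as noted in \cref{sec:preliminaries}) contains an odd cycle $C_F$ with $V(C_F) \subseteq V(F)$. Replacing each face $F \in \cP^* \cap \fodd(G)$ by such a $C_F$ yields a family $\cP$ of odd cycles. These cycles are pairwise vertex-disjoint: a cycle obtained from a face $F$ lives inside $V(F)$, the faces appearing in $\cP^*$ are pairwise vertex-disjoint and vertex-disjoint from the original odd cycles of $\cP^*$, and those original cycles were already pairwise disjoint. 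Hence $|\cP| = |\cP^*|$, so $T^*$ and $\cP$ satisfy $|T^*| \le 4|\cP|$.

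Finally, for the algorithmic claim I would verify that every ingredient is constructive and runs in polynomial time: computing the $\cF^*$-clouds amounts to finding connected components of $\vf(G)[\cF^* \cup V(G)]$; the merging operation of \cref{def:merging} is a local vertex split; \cref{lem:induction_step}, \cref{lem:packing_number1} and \cref{lem:clouds} are proved constructively (the latter two in \cref{sec:cloud_lemma}); and \cref{lemma:connect_clouds} of~\cite{KraSS12} comes with a polynomial-time procedure. Moreover, the induction in \cref{thm:main_thm_special} recurses only on graphs with strictly fewer faces, so there are at most $|\cF(G)|$ recursive calls and at most $|\cF(G)|$ merges overall; together with the polynomial cost of each step and of the initial embedding, this gives a polynomial-time algorithm producing $T$ and $\cP$. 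The only bookkeeping to watch is the correspondence between vertices and faces of the successively merged graphs and those of the original $G$ (as in the construction of $T_G$), which is maintained by a straightforward union–find-style data structure. I do not anticipate a real obstacle in the corollary itself: essentially all of the difficulty is already encapsulated in \cref{thm:main_thm_special} and the two cloud lemmata, which we are permitted to assume.
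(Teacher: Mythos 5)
Your proposal is correct and follows exactly the paper's route: apply \cref{thm:main_thm_special} with $\cF^* := \fodd(G)$, replace each odd face in the special packing by an odd cycle contained in its boundary, and observe that all steps (including the planar embedding via~\cite{ChiNAO85}) are constructive and polynomial-time. The paper states this in a few lines immediately before the corollary; your version merely spells out the same argument in more detail.
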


\begin{figure}[h]
    \begin{center}
          \begin{tikzpicture}[very thick, scale=1]
  	\fill[draw=SkyBlue, fill = SkyBlue!10] (0,0)to [out=-80, in=80] (0,-2.5) to [out=-80, in=80] (0, -4) to [out=10, in =170] (6, -4) to [out=100, in=-100] (6,0) to [out=190, in = -10] (2,0) to [out=190, in = -10] (0,0);
  	
	\fill[draw=white, left color=violet!15, right color=white] (8,2.5) to[out=-120,in=30] (6,0) to[out=-40,in=130] (8,-2.8) -- (8,2.5);
	\draw[violet] (8,2.5) to[out=-120,in=30] (6,0) to[out=-40,in=130] (8,-2.8);
	
	\fill[draw=white, top color = white, bottom color=orange!15, shading angle=20] (-4,-2.5) to[out=45, in=-170] (0, 0) to[out=20, in=160] (2,0) to[out=20, in=160] (6,0) to[out=45, in=-90] (7,2.5)  -- (-4, 2.5) -- (-4, -2.5);
	\fill[white] (0,0) to [out=80, in=30] (120:2.5) to [out=-150, in=160] (0,0);
	\draw[orange] (-4,-2.5) to[out=45, in=-170] (0, 0) to[out=80, in=30] (120:2.5) to[out=-150, in=160] (0, 0) to[out=20, in=160] (2,0)  to[out=20, in=160] (6,0) to[out=45, in=-90] (7,2.5);
	
	\draw[blue, fill=blue!8] (0,0) to[out=100, in=10] (120:2.5) to[out=-130, in=140] (0, 0);
	
	\draw[Magenta, fill=Magenta!8] (0,0) to[out=-170, in=140] (-120:4) to [out=10, in = -120] (0, -2.5) to[out=120, in=-90] (0,0);
	
	\draw[Red, fill=Red!10] (-120:4) to[out=-20, in=200] (0, -4)  to[out=-100, in=150] (1, -5) to [out=210, in=0] (-1, -6) to [out=180, in=-80]  (-120:4);
	
	\draw[Goldenrod, fill=Goldenrod!10] (6, -4) to[out=-10, in=45] (8, -6) to[out=225, in=-30] (5, -5) to[out=30, in=-80](6,-4);
	
	\draw[darkgreen, fill=darkgreen!10] (1, -5) to[out=30, in=150] (5,-5) to [out=210, in=-30] (1, -5);
	
	\node[fill, circle, minimum size=3mm] (a) at (0,0) {};
	\node[fill, circle, minimum size=3mm] (b) at (0,-4) {};
	\node[fill, circle, minimum size=3mm] (c) at (6,-4) {};
	\node[fill, circle, minimum size=3mm] (d) at (6,0) {};
	\node[fill, circle, minimum size=3mm] (e) at (120:2.5) {};
	\node[fill, circle, minimum size=3mm] (f) at (-120:4) {};
	\node[fill, circle, minimum size=3mm] (g) at (1,-5) {};
	\node[fill, circle, minimum size=3mm] (h) at (5,-5) {};
	\node[fill, circle, minimum size=3mm] (i) at (2,0) {};
	\node[fill, circle, minimum size=3mm] (j) at (0, -2.5) {};
	
	\node at (5.75, -0.4) {$v_1$};
	\node at (0.25, -0.4) {$v_2$};
	\node at (0.25, -3.6) {$v_3$};
	\node at (5.75, -3.6) {$v_4$};

	\node[fill, circle, inner sep=2.5, color=Magenta] (c1) at (-120:2) {};
	\node[fill, circle, inner sep=2.5, color=blue] (c3) at (120:1.5) {};
	\node[fill, circle, inner sep=2.5, color=orange] (c4) at (3.2, 2) {};
	\node[fill, circle, inner sep=2.5, color=violet] (c5) at (7,-0.2) {};
	\node[fill, circle, inner sep=2.5, color=SkyBlue] (c6) at (3, -2) {};
	\node[fill, circle, inner sep=2.5, color=Red] (c7) at (-1, -5) {};
	\node[fill, circle, inner sep=2.5, color=darkgreen] (c8) at (3, -5) {};
	\node[fill, circle, inner sep=2.5, color=Goldenrod] (c9) at (7, -5) {};
	
	\colorlet{halfcharge}{YellowGreen}
	\colorlet{fullcharge}{ForestGreen}
	\draw[dashed, ultra thick] (c1) to (c3);
	\draw[dashed, ultra thick] (c3) to (c4);
	\draw[dashed, ultra thick] (c4) to (c5);
	\draw[dashed, ultra thick, halfcharge] (c5) to node [below, text=black] {$e_1$} (c6);
	\draw[dashed, ultra thick, halfcharge] (c1) to node [above, text=black] {$e_3$}(c6);
	\draw[dashed, ultra thick, fullcharge] (c4) to node [right, text=black] {$e_2$} (c6);
	\draw[dashed, ultra thick] (c7) to (c1);
	\draw[dashed, ultra thick, bend right] (c7) to (c8);
	\draw[dashed, ultra thick, bend right] (c8) to (c9);
	\draw[dashed, ultra thick, bend right, fullcharge] (c7) to node [right, text=black] {$e_4$} (c6);
	\draw[dashed, ultra thick, bend left, fullcharge] (c9) to node [left, text=black] {$e_5$} (c6);
	
	\draw[dotted, gray] (a) -- (c6) -- (b) -- (c7) -- (g) -- (c8) -- (h) -- (c9) -- (c) -- (c6) -- (d) -- (c5);
	\draw[dotted, gray] (d) -- (c4) -- (a) -- (c3) -- (e);
	\draw[dotted, gray, bend left] (e) to (c4);
	\draw[dotted, gray] (a) -- (c1) -- (f) -- (c7);
	\draw[dotted, gray] (c6) -- (i) -- (c4);
	\draw[dotted, gray] (c6) -- (j) -- (c1);
\end{tikzpicture}
    \end{center}
    \caption{
        Example for a situation as in \cref{def:reduced_conflict_graph}, showing the surroundings of the light blue face of a graph $G$:
        The colored faces are the faces in $\cF\subseteq \cF(G)$.
        The relevant vertices of $G$ are drawn in black.
        The gray dotted lines are the edges in the vertex-face incidence graph $\vf(G)$ that are incident to $\cF$ and the black vertices.
        Then the reduced conflict graph $R$ for $\cF$ is constructed on the colored vertices (each representing a face in $\cF$).
        The edges of $R$ are the thick dashed lines (black and light and dark green).
        Note that $R$ depends on the embedding of $\vf(G)$ since there are two possibilities for embedding the edge between $v_2$ and the orange face.\\
        When we choose the light blue face as $F\in \cF$ in \cref{lem:degree_in_reduced_conflict_graph}, we get $T_1 = \{v_1, v_2\}$.
        Hence $\cF_1$ consists of the light blue, the violet, the orange, the dark blue, and the magenta face.
        $\cF_2$ then consists of the red and the yellow face; we get $T_2 = \{v_3, v_4\}$. 
        The edges in $\delta_R(F)$ that ultimately carry a charge of $\frac12$ are colored in light green; the edges with a charge of $1$ are colored in dark green:
        $v_1$ adds a charge of $\frac12$ to each of $e_1$ and $e_2$, $v_2$ adds a charge of $\frac12$ to each of $e_2$ and $e_3$, and $v_3$ and $v_4$ add a charge of 1 to $e_4$ and $e_5$, respectively.
    }
    \label{fig:reduced_conflict_graph}
\end{figure}

\section{Structural properties of clouds}\label{sec:cloud_lemma}
The goal of this section is to prove the statements about clouds that we used in our proof of \cref{thm:main_thm_special}.
We first define a ``reduced'' planar version of the conflict graph on the faces of an $\cF^*$-cloud.
A similar construction was used in \cite{Sch24}:
\begin{definition}[Reduced conflict graph]\label{def:reduced_conflict_graph}
Given a planar graph $G$ with a fixed planar embedding and a set $\cF \subseteq \cF(G)$ of faces of $G$, we define the \emph{reduced conflict graph} $R$ for $\cF$ as a planar graph on $V(R) := \cF$ as follows:
For a vertex $v\in V(G)$ let $\cF_v\subseteq \cF$ be the set of faces containing $v$ and $k:= |\cF_v|$.
The planar embedding of $\vf(G)$ induces a cyclic order on $\delta_{\vf(G)}(v) = \{\{v, F\} : F \in \cF_v\}$.
Enumerate the faces in $\cF_v$ according to this order as $F_1 =: F_{k + 1}, F_2,\dots, F_k$.
If $k \geq 2$, add the edges $\{F_i, F_{i+1}\}$ for any $i = 1,\dots, k$ to $R$ with the obvious planar embedding.
Finally, we identify homotopic edges in $R$ (i.e., parallel edges that bound a face of $R$).
For an example see \Cref{fig:reduced_conflict_graph}.
\end{definition}

The following lemma helps us formalize some intuition about the reduced conflict graph, see also \Cref{fig:reduced_conflict_graph}.
\begin{lemma}
    \label{lem:degree_in_reduced_conflict_graph}
    Let $G$ be a planar graph with a fixed planar embedding, $\cF\subseteq \cF(G)$ a set of faces of $G$, and $R$ the reduced conflict graph for $\cF$.
    Given a face $F \in \cF$, there is a subset of vertices $T\subseteq V(F)$  with $|T|\leq |\delta_R(F)|$ such that $T\cap V(F') \neq \emptyset$ for all $F'\in \cF$ with $V(F')\cap V(F)\neq \emptyset$.
\end{lemma}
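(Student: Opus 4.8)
The plan is to pick, for each vertex $v \in V(F)$, at most one neighbouring face of $F$ that is adjacent to $F$ \emph{at $v$}, and to "charge" this choice to an edge of $R$ incident to $F$. Concretely: fix $F \in \cF$. Every $F' \in \cF$ with $V(F') \cap V(F) \neq \emptyset$ shares some vertex $v$ with $F$, so $F, F' \in \cF_v$; if additionally $|\cF_v|\ge 2$ (which holds since $F\neq F'$ are both in $\cF_v$) then $F'$ has at least one $R$-neighbour among $\cF_v$, and in particular the edge set $\delta_R(F)$ "sees" every vertex $v \in V(F)$ at which $F$ has a neighbouring face in $\cF$. The set $T$ will be built by walking, for each such $v$, around the cyclic order on $\cF_v$ induced by the embedding of $\vf(G)$ and using the $R$-edges at $v$ incident to $F$ to cover all of $\cF_v\setminus\{F\}$ with a single vertex, namely $v$ itself.

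The key combinatorial observation is this. Fix $v \in V(F)$ and enumerate $\cF_v = \{F_1,\dots,F_k\}$ cyclically as in Definition~\ref{def:reduced_conflict_graph}, with $F = F_j$ say. If $k \geq 2$, then $F$ is $R$-adjacent (via edges "created at $v$") to $F_{j-1}$ and $F_{j+1}$; more importantly, \emph{every} other face in $\cF_v$ is connected to $F$ through a path of $R$-edges all created at $v$, so the single vertex $v$ hits every $F' \in \cF_v$. Thus it suffices to select, for each $v \in V(F)$ at which $F$ has at least one neighbour in $\cF$, the vertex $v$ into $T$ — but to keep $|T| \leq |\delta_R(F)|$ we must charge each such $v$ to a \emph{distinct} edge of $\delta_R(F)$. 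The natural candidate is one of the (at most two) edges of $R$ at $v$ incident to $F$, i.e. $\{F, F_{j-1}\}$ or $\{F, F_{j+1}\}$. The subtlety is that the identification of homotopic edges can make $\{F,F_{j-1}\}$ and $\{F,F_{j+1}\}$ coincide, or make the $R$-edge at $v$ equal to an $R$-edge created at a different vertex $w \in V(F)$; so a naive "one vertex per edge" assignment can double-book an edge.

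To handle this, I would set up a small charging/assignment argument, exactly as previewed in Figure~\ref{fig:reduced_conflict_graph}: give each $v \in V(F)$ that needs to be in $T$ a budget, and let it deposit a charge of $\tfrac12$ on each of the (at most two) $R$-edges at $v$ incident to $F$, or a charge of $1$ if there is only one such edge. A homotopic edge of $R$ is the boundary between two faces of $R$, hence can be "created" at exactly two vertices of $G$ on its two sides — so each edge of $\delta_R(F)$ receives total charge at most $\tfrac12 + \tfrac12 = 1$. Since the total charge deposited equals the number of selected vertices (each contributes exactly $1$ in total), we get $|T| \le \sum_{e \in \delta_R(F)} (\text{charge on }e) \le |\delta_R(F)|$, and by construction $T \subseteq V(F)$ with $T \cap V(F') \neq \emptyset$ for every $F' \in \cF$ meeting $F$. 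The main obstacle is precisely the bookkeeping around the homotopic-edge identification — verifying that an edge of $R$ incident to $F$ arises from at most two vertices of $V(F)$, and that when it arises from only one vertex $v$ then $v$ is the \emph{only} selected vertex charging it (so the charge $1$ is not exceeded). This is a finite case check using planarity of $R$ and the definition of homotopic edges, and I expect it to go through cleanly once the cyclic-order picture at each $v$ is drawn carefully.
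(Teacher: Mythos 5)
There is a genuine gap, and it sits exactly where you flagged the ``main obstacle'': the claim that each edge of $\delta_R(F)$ arises from at most two vertices of $V(F)$ is false. The identification of homotopic edges can merge edges created at arbitrarily many distinct vertices into a single edge of $R$. Concretely, take $\cF = \{F, F'\}$ where $F$ and $F'$ share $m$ vertices $v_1,\dots,v_m$ and no other face of $\cF$ meets $F$. At each $v_i$ the construction adds a bigon between $F$ and $F'$, and after identifying homotopic edges all of these collapse to a \emph{single} edge, so $|\delta_R(F)| = 1$. Your rule ``select every $v \in V(F)$ at which $F$ has a neighbour in $\cF$'' puts all $m$ vertices into $T$, and each deposits charge $1$ on that one edge; the bound $|T| \le |\delta_R(F)|$ fails badly. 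The lemma itself is fine here ($T = \{v_1\}$ suffices), so the problem is that your $T$ over-selects: you need one vertex \emph{per neighbouring face}, not one per vertex of incidence.

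The paper's proof repairs exactly this with a two-tier selection. It first takes $T_1 \subseteq V(F)$ to be the vertices lying in at least \emph{three} faces of $\cF$; each such $v$ produces a genuine face $B_v$ of $R$ (a $k$-gon with $k \ge 3$, which survives the homotopic identification) incident to $F$, and $v$ charges $\tfrac12$ to two boundary edges of $B_v$ in $\delta_R(F)$. Since every edge of $R$ is incident to at most two faces of $R$, no edge collects more than $1$ this way. Then, for each face $F'$ meeting $F$ but missed by $T_1$, it adds a \emph{single} vertex $v' \in V(F) \cap V(F')$; such a $v'$ lies in exactly two faces of $\cF$, so the edge $\{F,F'\}$ it creates is incident to no $B_v$ and is still uncharged, and receives charge $1$. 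Both features you are missing --- restricting to one representative vertex per uncovered face, and using the multiplicity-$\ge 3$ vertices (whose faces $B_v$ are not destroyed by identification) to absorb the rest --- are needed; without them the charging does not close.
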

\begin{proof}
	We will define non-negative charges $c(e)$ on the edges $e\in\delta_R(F)$ as follows.
    Let $T_1 \subseteq V(F)$ be the set of vertices of $F$ that are contained in at least three faces in $\cF$.
    When constructing the reduced conflict graph $R$ for $\cF$, each $v\in T_1$ produces a face $B_v$ of $R$ that is incident to $F$ in $R$.
    For every such face $B_v$, add a charge of $\frac12$ to two different edges in $\delta_R(F)$ that are boundary edges of $B_v$.
    Because each edge is only incident to at most two faces, $c(e) \leq 1$ for all $e\in \delta_R(F)$. See also \Cref{fig:reduced_conflict_graph}.
    
    Define $\cF_1 := \{F'\in \cF: V(F')\cap T_1 \neq \emptyset\}$ and $\cF_2 := \{F'\in \cF: V(F')\cap V(F) \neq \emptyset\} \setminus \cF_1$.
    Construct $T_2\subseteq V(F)$ by adding exactly one vertex $v'\in V(F')\cap V(F)$ for each $F'\in \cF_2$.
    Now consider a face $F'\in \cF_2$, and the corresponding $v'\in V(F') \cap V(F) \cap T_2$.
    Since $v'\notin T_1$, $F$ and $F'$ are the only faces in $\cF$ containing $v'$.
    Hence $R$ contains an edge $e_{v'} = \{F, F'\} \in \delta_R(F)$.
    Note that $e_{v'}$ is not incident to any face $B_v$ for any $v\in T_1$ because otherwise we would have $v\in V(F')$ for this $v\in T_1$.
    Therefore $e_{v'}$ has not been charged at all so far.
    Now $v'$ adds a charge of 1 to $e_{v'}$.
    In doing so, we still guarantee $c(e) \leq 1$ for all $e\in \delta_R(F)$.
    
    Ultimately we obtain $|\delta_R(F)|\geq \sum_{e\in\delta_R(F)}c(e) = |T_1| + |T_2|$ because every vertex in $T_1$ and every vertex in $T_2$ has added a total charge of 1 to the edges in $\delta_R(F)$.
    But $T := T_1\cup T_2$ is a vertex set as required in the lemma.
\end{proof}

We can now prove \cref{lem:packing_number1} which we restate here for convenience:

\packingnumber

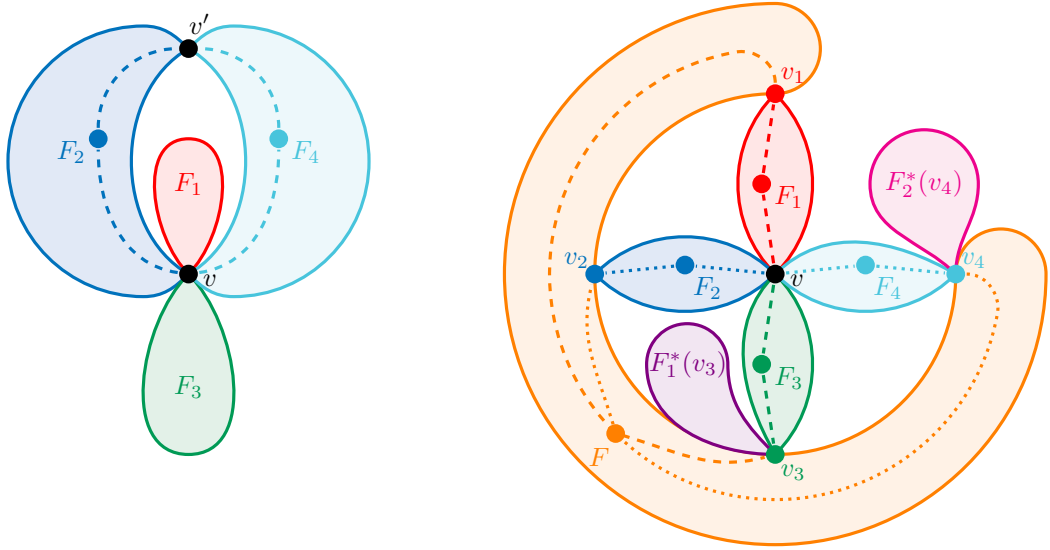
\begin{figure}
	\begin{center}
		\begin{tikzpicture}[scale=0.6, very thick]
     \colorlet{colorf1}{red}
     \colorlet{colorf2}{RoyalBlue}
     \colorlet{colorf3}{ForestGreen}
     \colorlet{colorf4}{SkyBlue}
     \colorlet{colorf}{orange}
     \colorlet{colorfstar1}{violet}
     \colorlet{colorfstar2}{magenta}
    \begin{scope}[shift = {(13,0)}]
     \draw[colorf1, fill=colorf1!10] (0,0) to[out=45, in=-45] (0,4) to[out=225, in=135] (0,0);
     \draw[colorf2, fill=colorf2!10] (0,0) to[out=135, in=45] (-4,0) to[out=-45, in=-135] (0,0);
     \draw[colorf3, fill=colorf3!10] (0,0) to[out=-135, in=120] (0, -4) to[out=45, in=-45] (0,0);
     \draw[colorf4, fill=colorf4!10] (0,0) to[out=-45, in=-135] (4,0) to[out=150, in=45] (0,0);
     \draw[colorf, fill=colorf!10] (4,0) arc (0:-270:4) arc (-90:90:1) arc (90:360:6) arc (0:180:1);
     \draw[colorfstar1, fill=colorfstar1!10] (0,-4) to[out=135, in=-90] (-1.05,-2) arc (0:180:0.9) to[out=-90, in=180] (0,-4);
     \draw[colorfstar2, fill=colorfstar2!10] (4,0) to[out=90, in=-90] (4.5,2) arc (0:180:1.2) to[out=-90, in=135] (4,0);
     
     \node[fill, circle, inner sep=2.5, color=black] (v) at (0,0) {};
     \node[fill, circle, inner sep=2.5, color=colorf1] (f1) at (-0.3,2) {};
     \node[fill, circle, inner sep=2.5, color=colorf2] (f2) at (-2,0.2) {};
     \node[fill, circle, inner sep=2.5, color=colorf3] (f3) at (-0.3,-2) {};
     \node[fill, circle, inner sep=2.5, color=colorf4] (f4) at (2,0.2) {};
     \node[fill, circle, inner sep=2.5, color=colorf1] (v1) at (0,4) {};
     \node[color=colorf1] at (0.4,4.4) {$v_1$};
     \node[fill, circle, inner sep=2.5, color=colorf2] (v2) at (-4,0) {};
     \node[color=colorf2] at (-4.4,0.3) {$v_2$};
     \node[fill, circle, inner sep=2.5, color=colorf3] (v3) at (0,-4) {};
     \node[color=colorf3] at (0.4,-4.4) {$v_3$};
     \node[fill, circle, inner sep=2.5, color=colorf4] (v4) at (4,0) {};
     \node[color=colorf4] at (4.4,0.3) {$v_4$};
     \node[fill, circle, inner sep=2.5, color=colorf] (f) at (225:5) {};
     \node[color=colorf1] at (0.3,1.7) {$F_1$};
     \node[color=colorf2] at (-1.5,-0.3) {$F_2$};
     \node[color=colorf3] at (0.3,-2.3) {$F_3$};
     \node[color=colorf4] at (2.5,-0.3) {$F_4$};
     \node[color=colorfstar1] at (-1.93, -2) {$F^*_1(v_3)$};
     \node[color=colorfstar2] at (3.3,2) {$F^*_2(v_4)$};
     \node[color=colorf] at (-3.9,-4.05) {$F$};
     \node[color=black] at (0.48,-0.14) {$v$};
     
     \draw[dashed, colorf1] (v) to (f1) to (v1);
     \draw[dashed, colorf3] (v) to (f3) to (v3);
     \draw[dotted, colorf2] (v) to (f2) to (v2);
     \draw[dotted, colorf4] (v) to (f4) to (v4);
     \draw[dashed, colorf] (v1) to[out=90,in=10] (100:5) arc (100:225:5) to[out=-15,in=195] (v3);
     \draw[dotted, colorf] (v4) to[out=0,in=80] (-10:5) arc (350:225:5) to[out=105, in=-105] (v2);
    \end{scope}

	\begin{scope}[shift={(0,0)}]
	 \draw[colorf1, fill=colorf1!10] (0,0) to[out=45, in=0] (0,3) to[out=180, in=135] (0,0);
     \draw[colorf2, fill=colorf2!10] (0,0) to[out=150, in=-150] (0,5) to[out=135, in=0] (-1,5.5) arc (90:270:3) to[out=0, in=-135] (0,0);
     \draw[colorf3, fill=colorf3!10] (0,0) to[out=-135, in=180] (0, -4) to[out=0, in=-45] (0,0);
     \draw[colorf4, fill=colorf4!10] (0,0) to[out=-45, in=180] (1,-0.5) arc(-90:90:3) to[out=180,in=45] (0,5) to[out=-30, in=30] (0,0);
     \node[fill, circle, inner sep=2.5, color=black] (v) at (0,0) {};
     \node[fill, circle, inner sep=2.5, color=colorf2] (f2) at (-2,3) {};
     \node[fill, circle, inner sep=2.5, color=colorf4] (f4) at (2,3) {};
     \node[fill, circle, inner sep=2.5, color=black] (vprime) at (0,5) {};
     \node[color=colorf1] at (0,2) {$F_1$};
     \node[color=colorf2] at (-2.6,2.7) {$F_2$};
     \node[color=colorf3] at (0,-2.5) {$F_3$};
     \node[color=colorf4] at (2.6,2.7) {$F_4$};
     \node[color=black] at (0.48,-0.14) {$v$};
     \node[color=black] at (0.25,5.55) {$v^\prime$};
     \draw[dashed, color=colorf2] (v) to[out=172, in=-90] (f2) to[out=90,in=180] (vprime);
     \draw[dashed, color=colorf4] (v) to[out=8,in=-90] (f4) to[out=90,in=0] (vprime);
	\end{scope}
\end{tikzpicture}
	\end{center}
	\caption{
		The left part shows the situation of Case 1 in Lemma~\ref{lem:packing_number1} where $F_2$ and $F_4$ meet in $v^\prime$.
		The cycle $C$ in $\vf(G)$ is drawn dashed.
		The right part shows the more complicated Case 2.
		The embeddings of the cycles $C_1$ (dashed lines) and $C_2$ (dotted lines) both separate the faces $F^*_1(v_3)$ and $F^*_2(v_4)$ from each other.
	}
	\label{fig:nu1_lemma}
\end{figure}

\begin{proof}
	First, observe that $\nu(\cF) = 1$ implies that for any $F, F^\prime \in \cF$ we have $V(F) \cap V(F^\prime) \neq \emptyset$.
	Thus, the statement clearly holds if $|\cF| \leq 4$:
	Given an even-cardinality subset $\cF^\prime \subseteq \cF$, partition $\cF^\prime$ into at most two pairs.
	For each pair, choosing a vertex where the two paired up faces meet yields an $\cF^\prime$-transversal.
	
	So we assume $|\cF| \geq 5$ from now on.
	In this case, we will show something stronger:
	Call a set $T \subseteq V(\cF)$ \emph{good} if $\vf(G)[T \cup \cF]$ is connected.
	In particular, a good set is an $\cF^\prime$-transversal for any even-cardinality $\cF^\prime \subseteq \cF$.
	We will show the existence of a good set $T$ with $|T| \leq 2$.
	
	Let $G^\prime := \vf(G)[\cF \cup V(G)]$ with a planar embedding as in \cref{def:vf}.
	If there is no vertex $v \in V(G)$ where more than $3$ faces of $\cF$ meet, then we can embed the conflict graph of $\cF$, i.e.\@ the complete graph on $\cF$, planarly, which is a contradiction.
	Thus, choose $v \in V(G)$ with four different faces $F_1, \dots, F_4 \in \cF$ such that the edges $\{v, F_1\}, \{v, F_2\}, \{v, F_3\}, \{v, F_4\} \in \delta_{G^\prime}(v)$ are arranged in this order around $v$ (in the planar embedding of $G^\prime$).
	
	\begin{description}
		\item[Case 1:] For some $i \in \{1, 2\}$, $F_i$ and $F_{i+2}$ meet in another vertex $v^\prime \neq v$.
		W.l.o.g.\@ $i=2$.
		We consider the cycle $C = v F_2 v^\prime F_4 v$ in $G^\prime$.
		Both connected components of $\mathbb{R}^2 \setminus C$ contain a face in $\cF$, namely $F_1$ and $F_3$, respectively (cf.\@ \Cref{fig:nu1_lemma}).
		Thus, any other cycle in $\cF$ must contain $v$ or $v^\prime$ in order to intersect the vertex sets of both $F_1$ and $F_3$.
		In particular, $T := \{v, v'\}$ is good.
		\item[Case 2:] $V(F_1) \cap V(F_3) = \{v\}$ and $V(F_2) \cap V(F_4) = \{v\}$.
		We can assume the existence of a face $F \in \cF$ with $v \notin V(F)$ because otherwise $T := \{v\}$ is good.
		For $i = 1, \dots, 4$ let $v_i \in V(G)$ such that $F$ meets $F_i$ in $v_i$.
		As in Case 1, the cycle $C_1 = v F_1 v_1 F v_3 F_3 v$ in $G^\prime$ has the property that both connected components of $\mathbb{R}^2 \setminus C_1$ contain a face in $\cF$, namely $F_2$ and $F_4$, respectively.
		Analogously, $F_1$ and $F_3$ are part of different connected components of $\mathbb R^2\setminus C_2$ for the cycle $C_2 = v F_2 v_2 F v_4 F_4 v$ in $G'$.
		See \Cref{fig:nu1_lemma}.
		Let $T_1 := V(C_1)\cap V(G) = \{v, v_1, v_3\}$ and $T_2 := V(C_2)\cap V(G) = \{v, v_2, v_4\}$.
		
		We will show that some two-element subset $T$ of $T_1$ or $T_2$ is good.
		Assume this is false, i.e., for any $i \in \{1, 2\}$ and any $t \in T_i$ there is $F^*_i(t) \in \cF$ such that $V(F^*_i(t)) \cap T_i = \{t\}$.
		W.l.o.g.\@ $F^*_1(v) = F_2$ and $F^*_2(v) = F_1$.
		
		Now we consider $F^*_1(v_3)$.
		We know that $F_2 = F^*_1(v)$ and $F^*_1(v_3)$ share a vertex, but $v, v_1\notin V(F^*_1(v_3))$ and $v_1, v_3\notin V(F_2)$.
		Hence $F_2$ and $F^*_1(v_3)$ must lie in the same connected component of $\mathbb{R}^2 \setminus C_1$.
		Also, $F^*_1(v_3)$ lies in the same connected component of $\mathbb{R}^2 \setminus C_2$ as $F_3$ because $v_3 \in V(F^*_1(v_3)) \cap V(F_3)$, but $v_3\notin C_2$.
		
		Analogously, $F_2^*(v_4)$ and $F_4$ lie in the same connected component of $\mathbb R^2 \setminus C_1$, and $F_2^*(v_4)$ and $F_1$ lie in the same connected component of $\mathbb R^2 \setminus C_2$.
		
		Since $C_2$ separates $F_1$ from $F_3$, and $C_1$ separates $F_2$ from $F_4$, this implies that $F^*_2(v_4)$ lies in the other connected component of both $\mathbb{R}^2 \setminus C_1$ and $\mathbb{R}^2 \setminus C_2$ than $F^*_1(v_3)$.
		In particular, a vertex where $F^*_1(v_3)$ and $F^*_2(v_4)$ meet must lie in $V(C_1) \cap V(C_2) \cap V(G) = T_1 \cap T_2 = \{v\}$, a contradiction. \qedhere
	\end{description}
\end{proof}

We finally prove \cref{lem:clouds}.
To this end, we want to find a face $F \in \cW$ such that for any even-cardinality subset $\cF'$ of the set of $F$ and all its adjacent faces in $\cW$ we can find a small $\cF'$-transversal.
We will see that it suffices to find $F\in\cW$ with degree at most 4 in the reduced conflict graph.
If however there is no such $F$, we can instead find $F\in \cW$ with degree 5 in the reduced conflict graph and at least four incident faces that are triangles.
It turns out that such an $F$ also yields the desired outcome.
To simplify notation, we define the \emph{degree of a face}:
\begin{definition}[Boundary edge multi-set, degree of a face]
    For a face $F$ of a planar graph $G$ embedded in the plane, we denote by $E^+(F)$ the multi-set of boundary edges of $F$, containing bridges twice.
    We define the \emph{degree} of $F$ to be $\deg(F) := |E^+(F)|$.
\end{definition}

The following lemma follows from Euler's formula.
A slightly stronger statement has already been proven in~\cite{CheFS12} and~\cite{MaYZ13} (while working with the planar dual of $G$).
Here, we only briefly show what is necessary for proving Lemma~\ref{lem:clouds}.

\begin{lemma}
    \label{lem:min_degree}
    Let $G$ be a planar graph without any homotopic edges and without self-loops.
    Then there exists a vertex $v$ such that either
    \begin{enumerate}
        \item $|\delta_G(v)| \leq 4$ or
        \item $|\delta_G(v)| = 5$ and $v$ has at least four incident faces of degree 3.
    \end{enumerate}
\end{lemma}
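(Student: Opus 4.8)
The plan is to prove \cref{lem:min_degree} by a discharging argument built on Euler's formula. Fix the planar embedding of $G$ with respect to which ``homotopic'' is defined, and suppose for contradiction that no vertex of $G$ satisfies alternative~(1) or~(2). Since~(1) fails at every vertex, $|\delta_G(v)| \ge 5$ for all $v \in V(G)$, and since~(2) then fails at every vertex of degree exactly $5$, each such vertex is incident to at most three faces of degree $3$. The first step is to rule out small faces: I claim every face of $G$ has degree at least $3$. Indeed, a face of degree $1$ is bounded by a self-loop, and a face of degree $2$ is bounded either by two homotopic edges, or by a single bridge traversed twice --- in which case both endpoints of that bridge have degree $1$ --- and all of these are excluded by our hypotheses (in the last case, by $|\delta_G(v)|\ge 5$).

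Next I introduce charges: each vertex $v$ gets $\mu(v) := \deg(v) - 6$ and each face $F$ gets $\mu(F) := 2\deg(F) - 6$. Since $\sum_{v} \deg(v) = \sum_{F} \deg(F) = 2|E(G)|$ and $|V(G)| - |E(G)| + |\cF(G)| \ge 2$ for any nonempty plane graph, the total charge equals $6|E(G)| - 6|V(G)| - 6|\cF(G)| = -6\big(|V(G)| - |E(G)| + |\cF(G)|\big) \le -12$. I then redistribute: every face $F$ sends a charge of $\frac{2\deg(F)-6}{\deg(F)}$ across each of its $\deg(F)$ incidences with a vertex (its \emph{corners}), to the vertex of that corner. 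After this, every face has charge $0$; a face of degree $3$ sends nothing, while a face of degree $d \ge 4$ sends $2 - \tfrac{6}{d} \ge \tfrac12$ across each corner.

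It remains to check that every vertex now carries non-negative charge, which contradicts the negative total. A vertex of degree at least $6$ starts with $\mu(v) \ge 0$ and only gains charge. A vertex $v$ of degree exactly $5$ has five corners; since it is incident to at most three faces of degree $3$ --- and, having no incident self-loop, no degree-$3$ face can occupy two of its corners --- at least two of its corners lie in faces of degree $\ge 4$. Hence $v$ gains at least $2 \cdot \tfrac12 = 1$, and its final charge is at least $\mu(v) + 1 = 0$, completing the contradiction.

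The delicate points, on which I would spend the most care, are the bookkeeping around degenerate faces --- ensuring that ``no self-loops'', ``no homotopic edges'' together with minimum degree $\ge 5$ genuinely force every face to have degree $\ge 3$ --- and the translation of ``incident to at most three faces of degree $3$'' into ``at most three of the five corners at a degree-$5$ vertex lie in a triangular face'', which is exactly what the discharging count consumes; the Euler/discharging arithmetic itself is routine. (As the surrounding text notes, a stronger statement already appears, in the language of the planar dual, in \cite{CheFS12} and \cite{MaYZ13}.)
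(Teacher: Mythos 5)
Your proof is correct and follows essentially the same route as the paper's: a contradiction via Euler's formula combined with a charging scheme whose only substantive inputs are that every face has degree at least $3$ and that a degree-$5$ vertex has at least two face-incidences with faces of degree at least $4$. The only difference is cosmetic --- you discharge from faces to vertices with target charge $0$, while the paper sends charge from degree-$5$ vertices to large faces and then combines the two degree-sum inequalities directly; your handling of the degenerate faces and of the ``one corner per triangle'' subtlety is, if anything, slightly more careful than the paper's.
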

\begin{proof}
Assume the statement is false, i.e.\@, assume that $G$ has no vertex $v$ with $|\delta_G(v)| \leq 4$ and that each vertex $v$ with $|\delta_G(v)| = 5$ has at most three incident faces with degree 3.

For a vertex $v$ and a face $F$, define $\iota(v,F) := \frac12\cdot |\{e \in E^+(F) : v \in e\}|$.
Note that for any vertex $v$, we have $\sum_{F\in \cF(G)} \iota(v, F) = |\delta_G(v)|$.
Similarly, for a given face $F$, we have $\sum_{v\in V(G)} \iota(v, F) = \deg(F)$.

We assign a non-negative \emph{charge} $c(F)$ to each face $F$ as follows:
Each vertex $v$ with $|\delta_G(v)| = 5$ adds a charge of $\frac 14 \cdot \iota(v,F)$ to each face $F$ with $\deg(F)\geq 4$.
Since a vertex $v$ with $|\delta_G(v)| = 5$ has at most three incident faces with degree 3, $v$ adds at least a charge of $2\cdot \frac 14$ in total.
Let $a$ be the number of vertices of $G$ with degree exactly 5.
Then $\sum_{F\in \cF(G)}c(F) \geq  \frac 12\cdot a$.

Moreover on the one hand if $\deg(F) = 3$ then $c(F) = 0$; on the other hand  $\deg(F) \geq 4$ implies $\deg(F) -3 \geq \frac14\deg(F) \geq c(F)$. Note that each face has degree at least 3 because $G$ does not have any homotopic edges. Hence we have for each face $F$: $\deg(F) \geq 3 + c(F)$.

Let $m = |E(G)|$, $n = |V(G)|$, and $f = |\cF(G)|$. Euler's formula gives us
\begin{equation}
    \label{eqn:eulers_formula}
    f = m +2 -n.
\end{equation}
Furthermore we have
\begin{equation}
    \label{eqn:vertex_degree_sum}
    2m = \sum_{v\in V(G)}|\delta_G(v)| \geq 5a + 6(n-a) = 6n - a
\end{equation}
and
\begin{equation}
    \label{eqn:face_degree_sum}
    2m = \sum_{F\in \cF(G)} \deg(F)\geq \sum_{F\in \cF(G)} (3 + c(F)) \geq 3f + \frac12a.
\end{equation}
Plugging in (\ref{eqn:eulers_formula}) into (\ref{eqn:face_degree_sum}) yields $2m\geq 3m + 6 -3n + \frac 12 a$ and thus $m\leq 3n-6-\frac12a$.
Together with (\ref{eqn:vertex_degree_sum}) we hence obtain
\begin{equation*}
    6n -a \leq 2m \leq 6n - 12 - a
\end{equation*}
which yields a contradiction. Thus our initial assumption must be wrong; this proves the lemma.
\end{proof}

\cloudlemma

\begin{figure}
	\centering
	\begin{tikzpicture}[thick]
 \begin{scope}
 
 \draw[cyan, fill = cyan!10, very thick] (1,0) to [out = 160, in = -70] (0,1) to [out = -110, in = 20] (-1, 0) to [out = -20, in = 110] (0, -1) to [out = 70, in = -160] (1,0);
 
 \draw[orange, fill = orange!10, very thick] (1,0) to [out = 80, in = -10] (0.5, 1.5) to [out = 10, in = 135] (2,2) to [out=-45, in = 60] (1,0);
 \draw[red, fill = red!10, very thick] (1,0) to [out = 40, in = 120] (3.5, 1) to [out = -60, in = 10] (1,0);
 \draw[red, fill = red!10, very thick] (1,0) to [out = -40, in = -120] (3.5, -1) to [out = 60, in = -10] (1,0);
 \draw[orange, fill = orange!10, very thick] (1,0) to [out = -80, in = 10] (0.5, -1.5) to [out = -10, in = -135] (2,-2) to [out=45, in = -60] (1,0);
 
 \draw[ForestGreen, fill = ForestGreen!10, very thick] (0,1) to [out = 70, in = -160] (0.5, 1.5) to [out = 160, in = 45] (-1, 2) to [out = 225, in = 90] (-1, 1) to [out = 10, in = 170] (0,1);
 
 \draw[ForestGreen, fill = ForestGreen!10, very thick] (-1, 0) to [out = 100, in = -100] (-1, 1) to [out = 180, in = 90] (-2, 0) to [out = -90, in = 180] (-1, -1) to [out = 100, in = -100] (-1, 0);
 
 \draw[ForestGreen, fill = ForestGreen!10, very thick] (0,-1) to [out = -70, in = 160] (0.5, -1.5) to [out = -160, in = -45] (-1, -2) to [out = -225, in = -90] (-1, -1) to [out = -10, in = -170] (0,-1);
 
 \node[circle, minimum size=2mm, fill=cyan] (F) at (0,0) {};
 \node[circle, minimum size=2mm, fill=red] (f1) at (2.5,-0.7) {};
 \node[circle, minimum size=2mm, fill=red] (f2) at (2.5,0.7) {};
 \node[circle, minimum size=2mm, fill=orange] (f3) at (1.5,1.5) {};
 \node[circle, minimum size=2mm, fill=ForestGreen] (f4) at (-0.5,1.5) {};
 \node[circle, minimum size=2mm, fill=ForestGreen] (f5) at (-1.5,0) {};
 \node[circle, minimum size=2mm, fill=ForestGreen] (f6) at (-0.5,-1.5) {};
 \node[circle, minimum size=2mm, fill=orange] (f7) at (1.5,-1.5) {};
 
 \node[circle, minimum size=1mm, inner sep=2, fill=black] at (1,0) {};
 \node[circle, minimum size=1mm, inner sep=2, fill=black] at (0,1) {};
 \node[circle, minimum size=1mm, inner sep=2, fill=black] at (-1,0) {};
 \node[circle, minimum size=1mm, inner sep=2, fill=black] at (0,-1) {};
 \node at (0.9, 0.3) {$v^*$};
 \node at (0.3, 1) {$v_1$};
 \node at (-0.8, 0.3) {$v_2$};
 \node at (0.3, -1) {$v_3$};
 
 \draw[dashed, gray, thick] (F) -- (f7) -- (f1) -- (f2) -- (f3) -- (F) -- (f4) -- (f5) -- (f6) -- (f7);
 \draw[dashed, gray, thick] (f5) -- (F) -- (f6);
 \draw[dashed, gray, thick] (f3) -- (f4);

 \end{scope}
 \begin{scope}[shift={(10,0)}]
 	
 \filldraw[draw=cyan, very thick, fill=cyan!10]	(0,0) circle (1cm);
 \filldraw[draw=red, very thick, fill=red!10]	(60:2) circle (1cm);
 \filldraw[draw=red, very thick, fill=red!10]	(120:2) circle (1cm);
 \filldraw[draw=RoyalBlue, very thick, fill=RoyalBlue!10]	(180:2) circle (1cm);
 \filldraw[draw=ForestGreen, very thick, fill=ForestGreen!10]	(240:2) circle (1cm);
 \filldraw[draw=ForestGreen, very thick, fill=ForestGreen!10]	(300:2) circle (1cm);
 
 \node[circle, minimum size=2mm, fill=cyan] (F) at (0,0) {};
 \node[circle, minimum size=2mm, fill=red] (f1) at (60:2) {};
 \node[circle, minimum size=2mm, fill=red] (f2) at (120:2) {};
 \node[circle, minimum size=2mm, fill=RoyalBlue] (f3) at (180:2) {};
 \node[circle, minimum size=2mm, fill=ForestGreen] (f4) at (240:2) {};
 \node[circle, minimum size=2mm, fill=ForestGreen] (f5) at (300:2) {};
 
 \draw[dashed, gray, thick] (f5) -- (F) -- (f4) -- (f3) -- (f2) -- (F) -- (f1);
 \draw[dashed, ultra thick, ForestGreen] (f4) -- (f5);
 \draw[dashed, ultra thick, blue] (f3) -- (F);
 \draw[dashed, ultra thick, red] (f1) -- (f2);
 \node[circle, minimum size=1mm, inner sep=2, fill=black] at (0,-1.732) {};
 \node[circle, minimum size=1mm, inner sep=2, fill=black] at (0,1.732) {};
 \node[circle, minimum size=1mm, inner sep=2, fill=black] at (-1,0) {};
 \end{scope}
\end{tikzpicture}
	\bigskip
	\caption{Example for two possible situations in Case 2 of the proof of \Cref{lem:clouds}.
	The colored faces are the faces in $\cN$, the light blue face in the middle is $F$.
	The dashed lines represent the reduced conflict graph $R$.\\
	On the left hand side, we see the case where $\cB\neq \emptyset$:
	$\cB$ consists of the red faces that clearly contain $v^*$.
	$F$ and the orange faces also contain $v^*$.
	The only 3 faces that do not contain $v^*$ are the green faces, but they contain $v_1$, $v_2$, and $v_3$, respectively, so $\{v_1, v_2, v_3, v^*\}$ is an $\cF'$-transversal for any $\cF^\prime$.\\
	On the right hand side, we see the case where $\cB = \emptyset$ and $|\cF^\prime| = |\cF| = 6$.
	$R$ contains a perfect matching on $\cF$ as indicated by the thick, colored lines.
	This induces an $\cF^\prime$-transversal of size $3$, indicated by the black vertices.
	}
	\label{fig:clouds}
\end{figure}

\begin{proof}
    Let $R$ be the reduced conflict graph for $\cW$.
    As $R$ is planar and has no homotopic edges, there are two cases by \Cref{lem:min_degree}:
    \begin{description}
        \item[Case 1:] There is $F\in V(R)$ with $|\delta_R(F)| \leq 4$.
        Let $\cN \subseteq \cW$ consist of all faces of $\cW$ that have at least one common vertex with $F$.
        
        If $\nu(\cN) > 1$, then $\cF:= \cN$ has the demanded properties:
        Items~\ref{item:connected}, \ref{item:packingnumber} and \ref{item:centralface} hold by construction.
        Moreover, as $|\delta_R(F)| \leq 4$, we can find a set $T \subseteq V(F)$ with $|T| \leq 4$ such that $T \cap V(F') \neq \emptyset$ for all $F'\in \cF$ by \cref{lem:degree_in_reduced_conflict_graph}.
        Thus, the graph $\vf(G)[\cF\cup T]$ is connected.
        In particular, $T$ is an $\cF'$-transversal for any $\cF'\subseteq \cF$ where $|\cF'|$ is even and therefore $\tau(\cF^\prime) \leq 4$.
        
        If $\cN = \cW$ then $\nu(\cN) = \nu(\cW) > 1$ and we are in the above case with which we already dealt.
        
        So let us now consider the case where $\nu(\cN) = 1$ and thus in particular $\cN \neq \cW$.
        We take a face $F^*\in \cW \setminus \cN$ that has a common vertex with some $F^\prime \in \cN$ (say $v'\in V(F')\cap V(F^*)$). Define $\cF := \cN \cup \{F^*\}$. Note that $\nu(\cF) > 1$ because $V(F) \cap V(F^*) = \emptyset$.
        Furthermore, \ref{item:connected} and \ref{item:centralface} hold by construction. It remains to prove \ref{item:transversalnumber}:
        
        Let $\cF' \subseteq \cF$ have even cardinality. If $F^*\notin \cF'$, we have $\cF^\prime \subseteq \cN$. Since $\nu(\cN) = 1$, we have $\tau(\cF')\leq 2$ by \cref{lem:packing_number1}.
        
        If $F^* \in \cF'$, consider $\cF_1 := \cF' \oplus \{F^*, F'\}$ and $\cF_2 := \{F^*, F'\}$. Since $F^*\notin \cF_1$, there is an $\cF_1$-transversal $T_1\subseteq V(G)$ with $|T_1|\leq 2$ (again by \cref{lem:packing_number1}). Moreover $\{v^\prime\}$ is an $\cF_2$-transversal. Hence $T_1 \cup \{v^\prime\}$ is an $\cF_1\oplus \cF_2$-transversal by \cref{prop:symmetric_difference_of_transversals}. But $\cF_1\oplus \cF_2= \cF'\oplus \{F^*, F'\} \oplus \{F^*, F'\} = \cF'$ and $|T_1 \cup \{v^\prime\}| \leq 3$.

        \item[Case 2:] There is $F\in V(R)$ with $|\delta_R(F)| = 5$ and $F$ has at least four incident faces in $R$ with degree $3$.
        As in the first case, we consider the set $\cN \subseteq \cW$ of faces that have at least one common vertex with $F$.
        If $\nu(\cN) = 1$, we can proceed in exactly the same way as in the situation in Case 1 where $\nu(\cN) = 1$ and thus in particular $\cN \neq \cW$.
        
        \medskip
        
        Now suppose that we have $\nu (\cN) > 1$. We show that then $\cF:= \cN$ has the demanded properties: Items~\ref{item:connected}, \ref{item:packingnumber}, and \ref{item:centralface} already hold by construction.
        
        Let $\cB$ be the set of faces $F' \in \cF\setminus\{F\}$ for which there is no edge $\{F, F'\}$ in $E(R)$.
        First consider the case that $\cB\neq \emptyset$.
        In $R$, each $F'\in \cB$ is incident to a face $B$ of $R$ with $F \in V(B)$ and $\deg(B) \geq 4$.
        Since there is at most one face with degree $\geq 4$ in $R$ that is incident to $F$, all $F'\in \cB$ are incident to the same face $B$ of $R$.
        
        Choose $v^* \in V(G)$ such that the boundary edges of $B$ are the edges added for $v^*$ in Definition~\ref{def:reduced_conflict_graph}.
        In particular, all faces of $\cB$ contain $v^*$. Furthermore, at least three faces of $\cF \setminus \cB$ (including $F$) contain $v^*$. See also \Cref{fig:clouds}.
        
        Since $|\cF \setminus \cB| \leq 6$, there are at most $3$ faces in $\cF$ that do not contain $v^*$.
        Hence we can find $v_1, v_2, v_3 \in V(F)$ such that $V(F') \cap \{v_1, v_2,v_3, v^*\} \neq \emptyset$ for each $F' \in \cF$. In particular, $\vf(G)[\cF\cup \{v_1,v_2,v_3, v^*\}]$ is connected and thus $\tau(\cF') \leq 4$ for any subset $\cF'\subseteq \cF$ of even cardinality.
        
        If however $\cB = \emptyset$, there is an edge $\{F, F'\} \in E(R)$ for all faces $F'\in \cF\setminus \{F\}$.
        Then  $|\cF| \leq 6$.
        We distinguish again two cases for even subsets $\cF'\subseteq \cF$:
        \begin{enumerate}
            \item $|\cF'| \leq 4$.
            For each $F^\prime \in \cF^\prime$, choose $v_{F^\prime} \in V(F) \cap V(F^\prime)$.
            Then $\{v_{F^\prime} : F^\prime \in \cF^\prime \}$ defines an $\cF^\prime$-transversal and $\tau(\cF^\prime)\leq |\cF^\prime| \leq 4$.
            \item $|\cF'| = 6$. In particular we have $\cF' = \cF$. Since $F$ has at least 4 incident faces in $R$ with degree 3, $R$ contains a perfect matching $M$ for $\cF$ (cf.\@~\Cref{fig:clouds}).
            Adding a vertex in $V(F_1) \cap V(F_2)$ for each $\{F_1, F_2\} \in M$ yields an $\cF$-transversal of size $3$. \qedhere
        \end{enumerate}
    \end{description}
\end{proof}

\section{Acknowledgement}

The authors want to thank Dan Kr\'al\textquoteright{} for helpful discussions.

\bibliographystyle{plain}
\bibliography{references.bib}

\end{document}